\documentclass[a4paper,reqno]{amsart}

\usepackage{amsmath,amsfonts,amsthm,amssymb,hyperref,latexsym,mathrsfs,enumerate,pgfplots,tikz-cd}
\pgfplotsset{compat=1.16}

\addtolength{\hoffset}{-1.0cm}
\addtolength{\textwidth}{2.0cm}

\DeclareMathOperator{\diag}{\mathrm{diag}}

\DeclareMathOperator{\tr}{\mathrm{tr}}

\DeclareMathOperator{\diam}{\mathrm{diam}}

\DeclareMathOperator{\id}{\mathrm{id}}

\DeclareMathOperator{\ad}{\mathrm{Ad}}
\DeclareMathOperator{\cu}{\mathrm{Cu}}

\DeclareMathOperator{\cs}{\mathrm{C}^*}
\DeclareMathOperator{\js}{\mathcal{Z}}
\DeclareMathOperator{\Ann}{\mathrm{Ann}}
\DeclareMathOperator{\Aut}{\mathrm{Aut}}
\DeclareMathOperator{\End}{\mathrm{End}}
\DeclareMathOperator{\Ell}{\mathrm{Ell}}
\DeclareMathOperator{\Inv}{\mathrm{Inv}}
\DeclareMathOperator{\hol}{\mathrm{H\ddot{o}l}}
\DeclareMathOperator{\aff}{\mathrm{Aff}}

\DeclareMathOperator{\lip}{\mathrm{Lip}}

\newcommand{\nn}{\mathbb{N}}
\newcommand{\zz}{\mathbb{Z}}
\newcommand{\qq}{\mathbb{Q}}
\newcommand{\rr}{\mathbb{R}}
\newcommand{\cc}{\mathbb{C}}
\newcommand{\tot}{\mathbb{T}}

\makeatletter
\@namedef{subjclassname@2020}{%
  \textup{2020} Mathematics Subject Classification}
\makeatother

\newtheorem {theorem}{Theorem}[section]
\newtheorem {lemma}[theorem]{Lemma}
\newtheorem {proposition}[theorem]{Proposition}

\newtheorem {question}{Question}

\newtheorem {thm}{Theorem}

\theoremstyle {definition}

\newtheorem {example}[theorem]{Example}
\newtheorem {definition}[theorem]{Definition}
\newtheorem {remark}[theorem]{Remark}

\numberwithin{equation}{section}

\title{Chaotic tracial dynamics}

\author[B.~Jacelon]{Bhishan Jacelon}
\address[B.~Jacelon]{
Institute of Mathematics of the Czech Academy of Sciences\\ \v{Z}itn\'{a} 25\\115 67 Prague 1\\Czech Republic}
\email{bjacelon@gmail.com}

\subjclass[2020]{46L35, 46L85, 37E05, 37A25, 37A55}
\keywords{$\mathcal{Z}$-stable $\cs$-algebras, $\cs$-dynamical systems, mixing, chaos, central limit theorem}

\begin{document}

\begin{abstract}
The action on the trace space induced by a generic automorphism of a suitable finite classifiable $\cs$-algebra is shown to be chaotic and weakly mixing. Model $\cs$-algebras are constructed to observe the central limit theorem and other statistical features of strongly chaotic tracial actions. Genericity of finite Rokhlin dimension is used to describe $KK$-contractible stably projectionless $\cs$-algebras as crossed products. 
\end{abstract}

\maketitle

\section{Introduction} \label{section:intro}

This article is an investigation of the tracial properties of endomorphisms of classifiable $\cs$-algebras. Here,  `classifiable' means specifically via the \emph{Elliott invariant}
\[
\Ell(A) = (K_0(A),K_0(A)_+,\Sigma(A),T(A),\rho_A\colon K_0(A)\to\aff(T(A)),K_1(A)),
\]
whose components are $K$-theory $K_*(A)$ (together with its order structure $K_0(A)_+$ and scale $\Sigma(A)$, which in the unital case is simply the $K_0$-class of the unit), the tracial state space $T(A)$, and the pairing $\rho_A$ between the two. By \cite{Elliott:2016ab,Gong:2020ud,Gong:2020uf,Elliott:2020wc,Gong:2020vg,Gong:2021tw} (see also \cite{Carrion:wz}), infinite-dimensional, simple, separable $\cs$-algebras that have continuous scale, finite nuclear dimension (which by \cite{Castillejos:2020wv,Castillejos:2021wm,Tikuisis:2012kx,Winter:2012pi} is equivalent to tensorial absorption of the Jiang--Su algebra $\js$) and satisfy the universal coefficient theorem (UCT), are classifiable. And by \cite{Toms:2008vn}, a finer invariant than $\Ell$ is needed to extend classification beyond this class.

In this paper, we consider both unital and nonunital classifiable $\cs$-algebras. `Continuous scale' is a technical assumption that entails algebraic simplicity of $A$ and compactness of $T(A)$ (see \cite[\S5]{Elliott:2020vp}). It is automatic for a separable, unital $\cs$-algebra and in the nonunital case it in particular ensures that the algebra does not support any unbounded lower semicontinuous traces. While the classification presented in \cite{Elliott:2020wc,Gong:2020vg,Gong:2021tw} goes beyond $\cs$-algebras with continuous scale, it is a necessary assumption for the analysis undertaken in the sequel.

An important problem in the study of group actions on simple, separable, $\js$-stable $\cs$-algebras is to determine when a given action $\alpha\colon G\to\Aut(A)$ is cocycle conjugate to its tensor product $\alpha\otimes\id_{\js}$ with the trivial action on $\js$. (As discussed in the introduction of \cite{Wouters:2021uw}, these are the actions one can expect to classify.) In \cite{Gardella:2021tb}, the problem is solved for actions of discrete, countable, amenable groups $G$ on unital $\cs$-algebras $A$ under the assumptions that the extreme boundary $\partial_e(T(A))$ of $T(A)$ is compact and of finite covering dimension and that the action of $G$ on $\partial_e(T(A))$ has finite orbits of bounded size, with Hausdorff orbit space $\partial_e(T(A))/G$. This motivates us to pose the following.

\begin{question} \label{q1}
What is the generic tracial behaviour of an automorphism of a stably finite classifiable $\cs$-algebra?
\end{question}

In our context, stable finiteness is equivalent to the trace space $T(A)$ being nonempty (see \cite[\S1.1.3]{Rordam:2002yu}). By `generic tracial behaviour' we mean that we seek to identify properties of induced affine homeomorphisms $T(A)\to T(A)$ that hold residually, that is, for at least a dense $G_\delta$ set of automorphisms $A\to A$ (in the topology of pointwise convergence). We will address Question~\ref{q1} under the additional assumptions that:
\begin{itemize}
\item the extreme boundary $\partial_e(T(A))$ of $T(A)$ is a finite-dimensional, compact, connected topological manifold;
\item the automorphism $\alpha$ of $A$ fixes an Oxtoby--Ulam \emph{OU trace} (roughly, a noncommutative analogue of Lebesgue measure---see \S~\ref{subsection:chaos} and \S~\ref{ssc:rep});
\item the pairing $\rho_A\colon K_0(A)\to\aff(T(A))$ is trivial;
\item $K_1(A)$ is torsion free.
\end{itemize}

With these hypotheses, we are able to use classification to lift known results about topological dynamical systems to the $\cs$-level. The strategy is simple (as long as we are willing to make free use of heavy classification machinery): Given an endomorphism $\alpha$, nudge its action on $\partial_e(T(A))$ to be in a desired topological class, then lift the perturbed invariant to an endomorphism $\beta$ which, by virtue of the closeness of $\Ell(\beta)$ to $\Ell(\alpha)$, is (up to conjugation by a unitary) within a given $\varepsilon$ of $\alpha$ on a given finite set. Actually, the invariant we must use is not $\Ell$ but one that also includes `total $K$-theory' and `Hausdorffised algebraic $K_1$' (see the proof of Theorem~\ref{lemma:denselift} for a brief primer), together with suitable pairings. The above assumptions on $K$-theory ensure that the compatibility demanded by these pairings is automatically satisfied.

In short, the following theorem indicates that when there are infinitely many extremal traces, the typical situation can be very different to the one considered in \cite{Gardella:2021tb}. (Note, however, that it does not represent an obstruction to generalising \cite{Gardella:2021tb} for single automorphisms; in the breakthrough article \cite{Wouters:2021uw}, it is shown how to treat the portion of $\partial_e(T(A))$ with unbounded orbits.)

\begin{thm} \label{thm1}
For every OU trace $\tau$ on $A$, the generic $\tau$-preserving automorphism of $A$ induces a homeomorphism of $\partial_e(T(A))$ that is weakly mixing and is moreover Devaney-chaotic if $\dim \partial_e(T(A)) \ge 2$.
\end{thm}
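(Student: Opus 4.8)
The plan is to import, through classification, the known genericity of weak mixing and of Devaney chaos for measure-preserving homeomorphisms of manifolds into the space of trace-preserving automorphisms of $A$. Fix an OU trace $\tau$ and let $\mu$ be the Borel probability measure on $X=\partial_e T(A)$ representing $\tau$ as a barycentre; since $T(A)$ is a Choquet simplex this $\mu$ is unique, and being OU it is non-atomic and of full support. If $\alpha$ fixes $\tau$ then the induced affine homeomorphism of $T(A)$ fixes the point $\tau$, so by uniqueness of barycentric decompositions its restriction $\hat\alpha\colon X\to X$ preserves $\mu$. This yields a map
\[
\Phi\colon \Aut(A)_\tau \longrightarrow \homeo(X,\mu), \qquad \alpha\mapsto \hat\alpha,
\]
where $\homeo(X,\mu)$ is the group of $\mu$-preserving homeomorphisms of $X$ in the uniform topology. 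The domain $\Aut(A)_\tau$ is a closed subgroup of the Polish group $\Aut(A)$ (pointwise-norm topology), hence is itself Polish and a Baire space, and $\homeo(X,\mu)$ is Polish as well.

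First I would verify that $\Phi$ is continuous: fixing a countable set $(a_k)$ metrizing the weak-$*$ topology of $T(A)$, if $\alpha_n\to\alpha$ pointwise in norm then $\sup_{\sigma\in T(A)}|\sigma(\alpha_n(a_k))-\sigma(\alpha(a_k))|\le \|\alpha_n(a_k)-\alpha(a_k)\|\to 0$ for each $k$, which forces $\hat\alpha_n\to\hat\alpha$ uniformly on $X$ (and likewise for inverses). Next I would record the topological input: by the Oxtoby--Ulam theorem together with the genericity theorems of Alpern--Prasad, the weakly mixing elements form a dense $G_\delta$ subset $W\subseteq\homeo(X,\mu)$, and when $\dim X\ge 2$ the Devaney-chaotic elements likewise form a dense $G_\delta$ subset $D\subseteq\homeo(X,\mu)$. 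Given continuity, $\Phi^{-1}(W)$ and $\Phi^{-1}(D)$ are $G_\delta$; it then suffices to prove a \emph{lifting property}: for every $\alpha\in\Aut(A)_\tau$, every finite $F\subseteq A$, every $\varepsilon>0$, and every $h\in\homeo(X,\mu)$ near $\hat\alpha$, there is $\beta\in\Aut(A)_\tau$ with $\|\beta(a)-\alpha(a)\|<\varepsilon$ for all $a\in F$ and $\hat\beta$ arbitrarily close to $h$. Density of $W$ (resp.\ $D$) then transports to density of $\Phi^{-1}(W)$ (resp.\ $\Phi^{-1}(D)$), and intersecting yields the claimed dense $G_\delta$ set.

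The core of the proof is this lifting, carried out as sketched in the introduction. Given the target $h$, I would promote it to an automorphism of the enriched classifying invariant of $A$: extend $h$ to an affine self-homeomorphism of $T(A)$ via barycentres, thence to an order- and unit-preserving automorphism of $\aff(T(A))$ fixing $\tau$, while keeping the action on total $K$-theory and on the Hausdorffised algebraic $K_1$ equal to that of $\alpha$. The standing hypotheses are precisely what make this assignment a compatible invariant automorphism: triviality of the pairing $\rho_A\colon K_0(A)\to\aff(T(A))$ means the altered tracial part imposes no constraint on the $K_0$-data, and torsion-freeness of $K_1(A)$ renders the compatibility conditions coming from total $K$-theory and the Hausdorffised $K_1$ pairing vacuous. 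Feeding this invariant automorphism into the existence half of classification produces a $\tau$-preserving automorphism $\beta_0$ realizing it, with $\hat\beta_0$ as close to $h$ as desired; since $\Ell(\beta_0)$ and $\Ell(\alpha)$ agree on the $K$-theoretic data and are close on traces, the uniqueness half shows $\beta_0$ is approximately unitarily equivalent to $\alpha$, so a conjugate $\beta=\ad_u\circ\beta_0$ lies within $\varepsilon$ of $\alpha$ on $F$. As $\tau$ is a trace and inner automorphisms act trivially on traces, $\beta$ still preserves $\tau$ and satisfies $\hat\beta=\hat\beta_0$, which is the required lift.

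I expect the lifting step to be the main obstacle. One must perform the perturbation of the \emph{tracial} part of the invariant so that it is simultaneously (i) an honest automorphism of $\aff(T(A))$ fixing $\tau$, (ii) positive, unital and compatible with the order on total $K$-theory, and (iii) close enough to the original that the uniqueness theorem applies for the prescribed $F$ and $\varepsilon$ rather than merely in the limit. One must also confirm that the invoked classification results genuinely apply in the stated generality (unital and nonunital, continuous scale, $\js$-stable) and to automorphisms rather than just to $*$-homomorphisms between distinct algebras. By contrast the dynamical input is essentially off the shelf; the delicate point is the interface between the free perturbation available on $X$ and the rigidity of the $K$-theory, which is exactly what the hypotheses on the pairing and on $K_1(A)$ are designed to neutralize.
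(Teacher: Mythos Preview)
Your proposal is correct and follows essentially the same route as the paper, which formalises the continuity and lifting steps as Lemmas~\ref{lemma:gdlift} and~\ref{lemma:denselift} and then combines them with the dynamical input (Theorems~\ref{thm:chaoticman} and~\ref{thm:mixingman}) in Theorem~\ref{thm:tracialgenerics}. The one detail to add is that density must be checked in the Polish topology~\eqref{eqn:polish2}, which also controls inverses up to unitary conjugation, so the approximate-uniqueness step must be applied to $h^{-1}$ versus $T(\alpha)^{-1}$ as well (the paper's conditions~\eqref{eqn:h1} and~\eqref{eqn:h2}).
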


See \S~\ref{section:topology} for the definitions of chaos and mixing, which are properties that are satisfied by, for example, hyperbolic toral automorphisms like Arnold's cat map. A particular consequence for the algebras covered by Theorem~\ref{thm1} is that, in contrast to the case of finitely many extremal traces, a typical tracial property of automorphisms is \emph{sensitive dependence on initial conditions}: Arbitrarily close extremal traces eventually get moved far apart. 

Whereas \emph{weak} mixing is residual, in both the measure-theoretic \cite{Halmos:1944wm} and topological \cite{Katok:1970wx, Alpern:1979th} settings, \emph{strong} mixing is enjoyed by meagre sets of measure-preserving transformations \cite{Rohlin:1948wr}. On the other hand, many interesting dynamical systems, for example those associated to Anosov diffeomorphisms \cite[\S1E]{Bowen:2008uh}, are not just strongly mixing but have exponentially fast mixing rates. This chaotic behaviour is reflected in such statistical features as exponential decay of correlations (EDC) and the central limit theorem (CLT). Once again, classification will afford us $\cs$-interpretations of these phenomena.

We will, however, need some additional structure in the ambient $\cs$-algebra to have satisfactory versions of the CLT and EDC, as these require not just continuous but Lipschitz (or at least H\"{o}lder continuous) observables. To that end, for a given compact, connected metric space $(X,d)$, we construct in Theorem~\ref{thm:models} an approximately subhomogeneous $\cs$-algebra $A$ with $\partial_e(T(A))\cong X$ and for which $\{a\mid \hat a \in \lip(X,d)\}$ is dense in the set $A_{sa}$ of self-adjoint elements of $A$ (where $\hat a\colon X\to\mathbb{R}$, which we refer to as an \emph{observable of the tracial dynamics}, denotes the evaluation map $\tau\mapsto\tau(a)$). Theorem~\ref{thm:lift} then provides the means of lifting a given dynamical system on $X$ to the $\cs$-level. In the uniquely ergodic but not necessarily chaotic setting, following \cite{Benoist:2016ur} (see Proposition~\ref{prop:breiman}) this structure will allow for uniform estimates of large deviation, that is, finite-time estimates of the rate of the tracial convergence guaranteed by Birkhoff's ergodic theorem. 

The models of the previous paragraph will in general have complicated $K_1$-groups. This is potentially vexing if one is interested in managing, via the Pimsner--Voiculescu sequence \cite{Pimsner:1980yu}, the $K$-theory of the crossed product of $A$ by an automorphism $\alpha\colon A\to A$ constructed to witness a given topological dynamical system $h\colon X\to X$. This issue can be addressed by lifting $h$ not to an automorphism but to a $K_1$-killing endomorphism, which, as in \cite{Stacey:1993uc}, can be extended to an automorphism of a stably isomorphic $\cs$-algebra. (Endomorphisms also allow for the inclusion of noninvertible tracial dynamics $h\colon X\to X$, for example the Pomeau--Manneville-type system described in \cite[\S5]{Jacelon:2021vc}, and to appeal to and interpret existing results \cite{Bobok:2020uj} about such systems.) In particular, if $X$ is an odd sphere, then the extended action is on an inductive limit of prime dimension drop algebras; upon computing the Elliott invariant (see Example~\ref{example:dps} and Remark~\ref{remark:final}), one learns via classification that these crossed products are just different descriptions of the ones considered in \cite{Deeley:aa}. 

The above discussion of the range of the Elliott invariant of crossed products motivates our second question.

\begin{question} \label{q2}
Which classifiable $\cs$-algebras can be described as crossed products of stably finite classifiable $\cs$-algebras by the integers?
\end{question}

Since Kirchberg algebras are already known to be included in the answer to Question~\ref{q2} (see, for example, \cite[Proposition 4.3.3]{Rordam:2002yu}), we focus our attention on stably finite targets. Combining work of Downarowicz \cite{Downarowicz:1991te} and Szab\'{o}, Wu and Zacharias \cite{Szabo:2019te}, we show that every metrisable Choquet simplex is attainable as the tracial state space of a crossed product of a classifiable $\cs$-algebra $A$, for which $\partial_e(T(A))$ is a Cantor space, by an automorphism with finite Rokhlin dimension. In the $KK$-contractible setting (or equivalently under the UCT, assuming that $K_*(A)=0$), \cite{Elliott:2020wc} then shows that, up to stable isomorphism, the full class is exhausted.

\begin{thm} \label{thm3}
Let $\mathcal{K}$ be the class of infinite-dimensional, simple, separable, $KK$-contractible $\cs$-algebras that satisfy the UCT and have continuous scale, nonempty trace space and finite nuclear dimension. Then, for every $B\in\mathcal{K}$ there exists $A\in\mathcal{K}$ and an automorphism $\alpha\in\Aut(A)$ such that $\partial_e(T(A))$ is compact and zero dimensional, and $A\rtimes_\alpha\zz\cong B$.
\end{thm}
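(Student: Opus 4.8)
The plan is to realise $B$ dynamically over a Cantor base and then invoke the classification of $KK$-contractible algebras. Since $B$ is $KK$-contractible we have $K_0(B)=K_1(B)=0$ and trivial pairing, so by \cite{Elliott:2020wc} the only surviving piece of $\Ell(B)$ is the trace simplex $T(B)$; because $B$ is separable with continuous scale, $T(B)$ is a metrisable Choquet simplex, and within $\mathcal{K}$ this simplex determines $B$ up to isomorphism once the continuous-scale normalisation is fixed. The target invariant is thus $T(B)$ alone.

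First I would realise $T(B)$ dynamically. By Downarowicz's theorem \cite{Downarowicz:1991te}, there is a minimal homeomorphism $h$ of a Cantor space $X$ whose simplex $M(X,h)$ of $h$-invariant Borel probability measures is affinely homeomorphic to $T(B)$. Next I would produce the base algebra: by the existence half of the classification \cite{Elliott:2020wc} (or by Theorem~\ref{thm:models}), choose $A\in\mathcal{K}$ that is $KK$-contractible and has $T(A)=P(X)$, the Bauer simplex of all probability measures on $X$, so that $\partial_e(T(A))\cong X$ is compact and zero dimensional; compactness of $X$ gives compactness of $P(X)$, consistent with continuous scale. Using Theorem~\ref{thm:lift} I would then lift $h$ to an automorphism $\alpha\in\Aut(A)$ whose induced map on $T(A)=P(X)$ is the pushforward $h_*$. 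Minimality of $(X,h)$ makes the action free, hence $\alpha$ outer, and zero-dimensionality of $X$ supplies Kakutani--Rokhlin towers that force $\alpha$ to have finite Rokhlin dimension.

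It remains to check that $A\rtimes_\alpha\zz$ lies in $\mathcal{K}$ and carries the right invariant. The trace space is the set of $\alpha$-invariant traces on $A$, which is exactly $M(X,h)\cong T(B)$; in particular it is compact. Outerness of $\alpha$ together with minimality of the boundary action yields simplicity, while separability and infinite-dimensionality are clear. The Pimsner--Voiculescu sequence \cite{Pimsner:1980yu} applied to the vanishing groups $K_*(A)=0$ shows $K_*(A\rtimes_\alpha\zz)=0$, and since the UCT class is closed under crossed products by $\zz$ the crossed product is again $KK$-contractible. Finite nuclear dimension of $A$ and finite Rokhlin dimension of $\alpha$ give finite nuclear dimension of $A\rtimes_\alpha\zz$ by Szab\'{o}--Wu--Zacharias \cite{Szabo:2019te}. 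After arranging the continuous-scale normalisation, $A\rtimes_\alpha\zz\in\mathcal{K}$. Finally, both $A\rtimes_\alpha\zz$ and $B$ are $KK$-contractible members of $\mathcal{K}$ with affinely homeomorphic trace simplices, so by \cite{Elliott:2020wc} they are stably isomorphic, and the continuous-scale condition upgrades this to $A\rtimes_\alpha\zz\cong B$.

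I expect the main obstacle to be the lift: producing an automorphism $\alpha$ that simultaneously induces the prescribed Cantor system on $\partial_e(T(A))$ and has finite Rokhlin dimension, so that the crossed product retains finite nuclear dimension and stays in $\mathcal{K}$. A secondary subtlety is passing from the stable isomorphism delivered by classification to a genuine isomorphism, which rests on the uniqueness of the continuous-scale representative in a stable isomorphism class with compact trace space.
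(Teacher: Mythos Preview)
Your outline matches the paper's proof almost step for step: Downarowicz for the Cantor model of $T(B)$, the Bauer simplex $P(X)$ as $T(A)$, a lift of $h$ to $\alpha\in\Aut(A)$ with finite Rokhlin dimension, then Pimsner--Voiculescu, Szab\'{o}--Wu--Zacharias and \cite{Elliott:2020wc} to conclude. Two points in the lift step need repair, and you are right that this is where the real content is.

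First, a citation issue: Theorem~\ref{thm:lift} is stated for \emph{unital} $A$ and rests on \cite[Theorem~5.12]{Gong:2021va}; members of $\mathcal{K}$ are stably projectionless, so one must invoke the nonunital existence theorem \cite[Theorem~7.5]{Elliott:2020wc} instead (this is exactly the substitution the paper makes). Second, and more substantively, your route to finite Rokhlin dimension does not work. Kakutani--Rokhlin towers for $(X,h)$ yield Rokhlin towers for the induced automorphism of $C(X)$, but here $A$ is a simple noncommutative algebra whose only link to $X$ is through $\partial_e(T(A))$; there is no mechanism for promoting clopen towers in $X$ to approximately central positive contractions in $A$ with the right orbit behaviour under $\alpha$. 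Similarly, the chain ``minimality $\Rightarrow$ freeness $\Rightarrow$ $\alpha$ outer'' breaks at both arrows: minimal systems can be periodic, and freeness of $h$ on the trace boundary says nothing about whether $\alpha$ is inner on $A$. The paper obtains finite Rokhlin dimension purely from $\js$-stability (Lemma~\ref{lemma:rokhlin}, after \cite{Szabo:2019te}): any nondegenerate endomorphism of a separable $\js$-stable algebra is, up to preserving the Elliott invariant, one of Rokhlin dimension at most $1$. Simplicity and (strong) outerness then follow from finite Rokhlin dimension via Lemma~\ref{lemma:classifiable}, with no use of minimality of $(X,h)$ at all. Finally, ``arranging the continuous-scale normalisation'' should be replaced by an actual verification that $A\rtimes_\alpha\zz$ has continuous scale; the paper does this by noting that an approximate unit of $A$ passes to one of the crossed product and that Cuntz comparison there can be checked tracially.
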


This article is organised as follows. In \S~\ref{section:topology}, we collate relevant properties of measure-preserving topological dynamical systems. Then in \S~\ref{section:classification}, we explain how to use classification to lift these properties to statements about $\cs$-dynamical systems. Examples of strongly chaotic dynamics, descriptions of associated statistical features like the CLT and EDC and the construction of models to suitably witness these phenomena are described in \S~\ref{section:strongchaos}. Finally, in \S~\ref{section:range} we show how to obtain $KK$-contractible classifiable $\cs$-algebras as crossed products.

\subsection*{Acknowledgements} This research was supported by the GA\v{C}R project 20-17488Y and RVO: 67985840. I am grateful to Ali Asadi--Vasfi, Karen Strung, Andrea Vaccaro, Stuart White and Huaxin Lin for many helpful discussions and to Cape Breton University for giving me the opportunity to teach its statistics courses. I would also like to thank the anonymous referee whose suggestions helped improve the clarity of the article.

\section{Generic properties of topological dynamical systems} \label{section:topology}

Chaos and mixing will be familiar notions to those working in dynamical systems but perhaps not to $\cs$-algebraists. We briefly introduce these concepts for the reader's convenience.

\subsection{Chaos} \label{subsection:chaos}

\begin{definition} \label{def:chaos}
Let $(X,d)$ be an infinite metric space and $h\colon X\to X$ a continuous map. The dynamical system $(X,h)$ is said to be \emph{chaotic} in the sense of Devaney \cite[Definition 8.5]{Devaney:1989ug} if:
\begin{enumerate}[1.]
\item \label{item:dev1} for every nonempty open sets $U,V\subseteq X$, there exists $n\in\nn$ such that $h^n(U)\cap V\ne\emptyset$ (that is, $(X,h)$ is \emph{transitive} or equivalently, \emph{irreducible});
\item \label{item:dev2} the periodic points of $h$ are dense in $X$;
\item \label{item:dev3} the system has \emph{sensitive dependence on initial conditions}, that is, there exists $\delta>0$ such that, for every $x\in X$ and every $\varepsilon>0$, there exist $y\in X$ and $n\in\nn$ such that $d(x,y)<\varepsilon$ and $d(h^nx,h^ny)\ge\delta$.
\end{enumerate}
\end{definition}
As pointed out in \cite{Banks:1992ux}, (\ref{item:dev3}) actually follows from (\ref{item:dev1}) and (\ref{item:dev2}) (and if $X$ is compact, it is easy to see directly that (\ref{item:dev3}) is preserved under topological conjugation). This means that this notion of chaos is genuinely topological. An equivalent characterisation is given in \cite{Touhey:1997wm}: $(X,h)$ is chaotic if and only if, for every nonempty open sets $U,V\subseteq X$, there is a periodic point whose forward orbit intersects both $U$ and $V$.

An important class of Devaney-chaotic dynamical systems is provided by the irreducible components of nonwandering Smale spaces. (See \cite[Theorem 3.5]{Bowen:2008uh} and \cite[3.8]{Bowen:2008uh}. Note that these results, though stated for diffeomorphisms, apply just as well to the abstract setting of Smale spaces; see \cite[Chapter 7]{Ruelle:2004ul}.) We will see some more examples in \S~\ref{section:strongchaos}, but in fact, under certain circumstances that we now recall (Theorem~\ref{thm:chaoticman} below), Devaney-chaotic maps are generic among measure-preserving homeomorphisms of topological manifolds.

The manifolds discussed in this paper will always be compact and will often be connected. Measures will typically be particularly tractable.

\begin{definition}[\cite{Alpern:2000ux}] \label{def:ou}
A Borel probability measure $\mu$ on a topological manifold $X$ is called an \emph{Oxtoby--Ulam (OU) measure} if it is faithful, nonatomic and zero on the boundary $\partial X$ of $X$ (if there is one).
\end{definition}

When $X=[0,1]^n$, OU measures are precisely those that are homeomorphic images of Lebesgue measure (see \cite[Theorem 2]{Oxtoby:1941aa}), and when $X$ is boundaryless, OU measures are generic (see \cite[Proposition 1.4]{Fathi:1980aa}).

We equip the set $\mathcal{H}(X,\mu)$ (respectively, $\mathcal{C}(X,\mu)$) of $\mu$-preserving homeomorphisms (respectively, continuous maps) $X\to X$ with the compact open topology, which for compact metric spaces $X$ means the topology of uniform convergence. A complete metric for the topology on $\mathcal{H}(X,\mu)$ is
\begin{equation} \label{eqn:polish1}
\rho(g,h) = \sup_{x\in X}d(g(x),h(x))+d(g^{-1}(x),h^{-1}(x)).
\end{equation}

\begin{theorem}  [\cite{Aarts:1999wc, Daalderop:2000wm}] \label{thm:chaoticman}
Let $X$ be a compact topological manifold of dimension $n\ge 2$ and let $\mu$ be an OU measure on $X$. Then, the set of Devaney-chaotic elements of $\mathcal{H}(X,\mu)$ is residual (that is, contains a dense $G_\delta$ set).
\end{theorem}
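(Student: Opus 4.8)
The plan is to establish the result by exhibiting the Devaney-chaotic maps as the intersection of countably many dense open subsets of the Polish space $\mathcal{H}(X,\mu)$, thereby producing a dense $G_\delta$ set. By the discussion following Definition~\ref{def:chaos}, sensitive dependence on initial conditions is automatic once transitivity and density of periodic points are secured, so the task reduces to these two conditions. First I would recast transitivity in the standard way: fixing a countable base $\{U_i\}$ for the topology of $X$, the transitive homeomorphisms are exactly those in $\bigcap_{i,j}\{h \mid h^n(U_i)\cap U_j\ne\emptyset \text{ for some } n\}$, and each such set is manifestly open in the topology of uniform convergence and is a countable intersection, so the heart of the matter is to prove each is dense. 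Density of periodic points requires more care, since ``having a dense set of periodic points'' is itself a $G_\delta$ condition: for each $i$ one takes the open set $\{h \mid h \text{ has a periodic point in } U_i\}$, and the intersection over all $i$ gives the homeomorphisms whose periodic points meet every basic open set, hence are dense.

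The genuine analytic input — and the step I expect to be the main obstacle — is proving the \emph{density} of these sets, which is precisely where the manifold structure, the hypothesis $n\ge 2$, and the Oxtoby--Ulam property of $\mu$ enter. The key tool is the Oxtoby--Ulam theorem, which (for $X=[0,1]^n$, and then for general compact manifolds via charts and handle decompositions) identifies $\mu$ with Lebesgue measure up to a homeomorphism, allowing one to transport any construction for Lebesgue-preserving maps back to $\mathcal{H}(X,\mu)$ by conjugation. Concretely, to approximate a given $h\in\mathcal{H}(X,\mu)$ uniformly by a transitive (respectively, periodic-point-producing) measure-preserving homeomorphism, I would invoke the Lax approximation theorem: every measure-preserving homeomorphism of the cube is uniformly approximable by one that permutes the members of a fine dyadic partition into dyadic cubes of equal measure. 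Such a ``cyclic permutation'' of cubes is the combinatorial engine behind both transitivity (by choosing the permutation to be a single long cycle sweeping through all cubes, so that iterates carry any $U_i$ across to any $U_j$) and density of periodic points (since a permutation of finitely many cubes, suitably realised, forces periodic orbits in every region). This is where $n\ge 2$ is indispensable: in dimension one the order structure of the interval obstructs transitivity of measure-preserving homeomorphisms, whereas for $n\ge 2$ there is enough room to route orbits freely.

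Assembling the pieces, I would argue as follows. Each of the two families of sets above is open, by the uniform-convergence topology together with continuity of the iterates $h\mapsto h^n$ and openness of the base sets. For density, given $h$ and $\varepsilon>0$, first reduce via the Oxtoby--Ulam homeomorphism to the Lebesgue-preserving case on a cube, then apply Lax's theorem to approximate $h$ within $\varepsilon$ by a map permuting a fine grid of equal-measure cubes, and finally perturb this dyadic permutation (again within $\varepsilon$, exploiting the freedom available in dimension $n\ge 2$) so that it realises the single required combinatorial feature — a long cycle for the transitivity sets, a fixed or periodic cell meeting the prescribed $U_i$ for the periodic-point sets. Transporting back by conjugation yields an element of $\mathcal{H}(X,\mu)$ that is $\varepsilon$-close to $h$ and lies in the targeted set, establishing density. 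Intersecting the two resulting dense $G_\delta$ sets, and appealing to the Baire category theorem in the completeness of the metric~\eqref{eqn:polish1}, produces a residual set of homeomorphisms that are simultaneously transitive and have dense periodic points, hence Devaney-chaotic. This is, in essence, the argument of Aarts--Daalderop and Daalderop--Fokkink that is cited, and I would present it by isolating the two $G_\delta$ conditions and invoking the Oxtoby--Ulam and Lax theorems as the two black boxes carrying the measure-theoretic and approximation content respectively.
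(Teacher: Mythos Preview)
The paper does not give its own proof of this theorem; it is quoted from \cite{Aarts:1999wc,Daalderop:2000wm}, with only a paragraph of commentary afterward. That commentary agrees with the density half of your sketch: Alpern's lemma on cyclic permutations of fine dyadic cubes produces chaotic measure-preserving homeomorphisms arbitrarily close to any given one, which is exactly the Lax-type approximation you invoke, transferred to manifolds via Oxtoby--Ulam and Brown's theorem.

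There is, however, a genuine gap in your $G_\delta$ argument for density of periodic points. You assert that
\[
\{h\in\mathcal{H}(X,\mu) \mid h \text{ has a periodic point in } U_i\}
\]
is open, but it is not: on $\mathbb{T}^2$ with Lebesgue measure, the identity (for which every point is periodic) is a $\rho$-limit of irrational translations, none of which has any periodic point. Unlike transitivity, ``dense periodic points'' is not manifestly a $G_\delta$ condition in the uniform topology, and note that the theorem only asserts that the chaotic maps are \emph{residual}---that is, \emph{contain} a dense $G_\delta$---not that they themselves form one. The cited proofs therefore cannot proceed by your decomposition ``transitive is $G_\delta$'' plus ``dense periodic points is $G_\delta$''; one instead isolates an auxiliary $G_\delta$ property (in the spirit of the maximal-chaos condition of \cite[Theorem~4.8]{Alpern:2000ux} that the paper mentions) which forces Devaney chaos, and shows \emph{that} to be dense via the dyadic approximation. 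Your black boxes are the right ones, but the periodic-point step needs to be routed through such an auxiliary condition rather than a direct openness claim.
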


The density part of Theorem~\ref{thm:chaoticman} is also a corollary of \cite[Lemma 2]{Alpern:1999vi}, which shows how to perturb a measure-preserving homeomorphism of the cube to one that cyclically permutes arbitrarily small dyadic cubes (and is therefore chaotic). As mentioned in \cite{Alpern:1999vi}, this can be translated to the setting of compact, connected manifolds via a modification of a theorem of Brown (see \cite[Theorem 9.6]{Alpern:2000ux}). Moreover, Theorem~\ref{thm:chaoticman} still holds if condition (\ref{item:dev3}) is strengthened to a truly metric property called `maximal' dependence on initial conditions (see \cite[Theorem 4.8]{Alpern:2000ux}).

\subsection{Mixing} \label{subsection:mixing}

\begin{definition} \label{def:mixing}
Let $(X,\Sigma,\mu)$ be a probability space. A $\mu$-preserving measurable map $h\colon X\to X$ is:
\begin{enumerate}[1.]
\item \emph{antiperiodic} if the set of periodic points of $h$ has measure zero;
\item \emph{ergodic} if, for every $A,B\in\Sigma$,
\[
\lim_{n\to\infty}\frac{1}{n}\sum_{k=0}^{n-1}\mu(h^{-k}(A)\cap B) = \mu(A)\mu(B);
\]
\item \emph{weakly mixing} if, for every $A,B\in\Sigma$,
\[
\lim_{n\to\infty}\frac{1}{n}\sum_{k=0}^{n-1}|\mu(h^{-k}(A)\cap B) - \mu(A)\mu(B)| = 0;
\]
\item \emph{strongly mixing} if, for every $A,B\in\Sigma$,
\[
\lim_{n\to\infty}\mu(h^{-n}(A)\cap B) = \mu(A)\mu(B).
\]
\end{enumerate}
\end{definition}

\begin{remark} \label{remark:mixing}
\begin{enumerate}[1.]
\item That this formulation of ergodicity is implied by the usual one (invariant sets having trivial measure) is an application of Birkhoff's ergodic theorem (see \cite[`Consequences of ergodicity']{Halmos:1960vh}). The other direction is easily deduced upon taking $A=B^c$.
\item It is also immediate that strong mixing implies weak mixing, which implies ergodicity, which implies antiperiodicity. In fact (see \cite[`Mixing']{Halmos:1960vh}) an invertible transformation $h$ is weakly mixing if and only if $h\times h$ is ergodic, which holds if and only if the only eigenvalue of the unitary operator $U_h\in\mathcal{B}(L^2(X,\Sigma,\mu))$, $f\mapsto f\circ h$, is $1$ with the constants as the only eigenfunctions.
\item If $X$ is a topological space and $\Sigma$ is its Borel $\sigma$-algebra, then a strongly mixing continuous map $h$ is topologically mixing on the support of $\mu$.
\item If $(X,h)$ is a mixing axiom A diffeomorphism, then the `Bowen measure' is strongly mixing (see, for example, \cite[Theorem 4.1, \S1E]{Bowen:2008uh}).
\end{enumerate}
\end{remark}
The \emph{measure algebra} associated to $(X,\Sigma,\mu)$ is the Boolean algebra $\mathfrak{B}$ consisting of the equivalence classes of measurable sets (where equivalence of $E$ and $F$ means that their symmetric difference has measure zero) and equipped with the usual set operations (intersection, union, complementation). If $\mu$ is nonatomic and $\mathfrak{B}$ is separable (with respect to the metric $d(E,F)=E \triangle F$), then $\mathfrak{B}$ is isomorphic to the measure algebra of the unit interval $I$ with Lebesgue measure $\lambda$ (see, for example, \cite[Theorem 9.3.4]{Bogachev:2007aa}). In this setting, it was shown in \cite{Halmos:1944wm} (see also \cite[`Category']{Halmos:1960vh}) that among invertible measure-preserving transformations of $(X,\Sigma,\mu)$, the weakly mixing ones are generic. This result was later adapted to topological dynamical systems.

\begin{theorem}[\cite{Katok:1970wx}] \label{thm:mixingman}
Let $X$ be a compact, connected topological manifold and let $\mu$ be an OU measure on $X$. Then, the weakly mixing elements of $\mathcal{H}(X,\mu)$ form a dense $G_\delta$ set.
\end{theorem}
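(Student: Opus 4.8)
The plan is to reduce the statement to the model case of the cube $I^n=[0,1]^n$ carrying Lebesgue measure $\lambda$, and then to prove that the weakly mixing maps form a dense $G_\delta$ subset of $\mathcal{H}(I^n,\lambda)$. Since $X$ is a compact, \emph{connected} manifold and $\mu$ is an OU measure (Definition~\ref{def:ou}), I would invoke the Oxtoby--Ulam theorem \cite{Oxtoby:1941aa} together with Brown's theorem in the form recorded in \cite[Theorem 9.6]{Alpern:2000ux}: there is a continuous surjection $I^n\to X$ that restricts to a homeomorphism on the interior and pushes $\lambda$ forward to $\mu$. This induces an isomorphism of topological groups $\mathcal{H}(I^n,\lambda)\cong\mathcal{H}(X,\mu)$ (for the uniform topology of \eqref{eqn:polish1}) under which weak mixing, being purely measure-theoretic and invariant under measure-preserving conjugacy (Definition~\ref{def:mixing}), is preserved. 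It therefore suffices to treat the cube.

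For the $G_\delta$ assertion, I would pass to the measure algebra $\mathfrak{B}$ of $(I^n,\lambda)$, which by \cite[Theorem 9.3.4]{Bogachev:2007aa} is isomorphic to that of $(I,\lambda)$, and realise $\mathcal{H}(I^n,\lambda)$ inside the Polish group $\mathrm{Aut}(\mathfrak{B})$ of measure-algebra automorphisms with its weak topology. The induced inclusion is continuous from the uniform topology into the weak topology. Using the equivalence ``$h$ weakly mixing $\iff$ $h\times h$ ergodic'' from Remark~\ref{remark:mixing}, together with the fact that $h\mapsto h\times h$ is weakly continuous and that the ergodic transformations form a weak-topology $G_\delta$ (Halmos \cite{Halmos:1944wm}, \cite[`Category']{Halmos:1960vh}), the weakly mixing set is a $G_\delta$ in $\mathrm{Aut}(\mathfrak{B})$ and hence, by continuity of the inclusion, a $G_\delta$ in the finer uniform topology on $\mathcal{H}(I^n,\lambda)$.

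The density of the weakly mixing set is the crux, and I would obtain it via a conjugacy argument rather than by transferring weak-topology density directly (the uniform topology is strictly finer, so weak density does not formally descend). Write $W\subseteq\mathcal{H}(I^n,\lambda)$ for the weakly mixing set. It is conjugation invariant, and it is nonempty because a weakly mixing volume-preserving homeomorphism of the cube exists, itself a consequence of the cyclic-permutation approximation technique (Lax's theorem and its refinements in \cite{Alpern:2000ux}). Each element of $W$ is antiperiodic, since weak mixing implies ergodicity implies antiperiodicity (Remark~\ref{remark:mixing}). Fixing $w\in W$, Alpern's topological analogue of Halmos's conjugacy lemma \cite{Alpern:2000ux} asserts that the \emph{uniform} conjugacy class $\{SwS^{-1}:S\in\mathcal{H}(I^n,\lambda)\}$ of the antiperiodic map $w$ is uniformly dense in $\mathcal{H}(I^n,\lambda)$. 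As $W$ contains this entire class, $W$ is uniformly dense, and combined with the $G_\delta$ property of the previous paragraph this yields residuality.

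The genuine obstacle is precisely this reconciliation of the coarse measure-algebra genericity (where Halmos's theorem lives) with the fine uniform topology on the homeomorphism group. The whole weight rests on the conjugacy lemma and, underneath it, on the Lax/cyclic-permutation approximation machinery of \cite{Alpern:2000ux}, which simultaneously produces a weakly mixing homeomorphism to seed the conjugacy class and guarantees that conjugation can be performed by homeomorphisms that are uniformly small; verifying that these approximations respect both the OU-measure and the manifold structure after the Brown reduction is where the technical effort concentrates.
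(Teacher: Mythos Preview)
Your overall architecture---Halmos's weak-topology $G_\delta$ for weak mixing, combined with a conjugacy lemma to upgrade to uniform density---is exactly what the paper indicates: the paragraph following the theorem explains that Theorem~\ref{thm:mixingman} is a special case of \cite[Theorem~10.3]{Alpern:2000ux}, which says that any conjugation-invariant weak-topology $G_\delta$ set $\mathcal{V}$ of measure-preserving transformations containing an antiperiodic element yields a dense $G_\delta$ set $\mathcal{V}\cap\mathcal{H}(X,\mu)$ in the uniform topology. Your $G_\delta$ paragraph and your density paragraph are both in line with this.

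There is, however, a genuine gap in your reduction step. Brown's theorem (in the form of \cite[Theorem~9.6]{Alpern:2000ux}) produces a continuous surjection $\phi\colon I^n\to X$ that is a measure-preserving homeomorphism from the interior of $I^n$ onto a full-measure subset of $X$, but this does \emph{not} induce an isomorphism of topological groups $\mathcal{H}(I^n,\lambda)\cong\mathcal{H}(X,\mu)$. Indeed $I^n$ and $X$ are typically not homeomorphic (take $X=S^n$), so neither are their homeomorphism groups: a $\mu$-preserving homeomorphism of $X$ pulled back along $\phi$ is only defined on the interior and need not extend continuously to $\partial I^n$, and conversely. Brown's map is used in \cite{Alpern:2000ux} to transfer the dyadic combinatorics and the Lax-type approximation from the cube to the manifold, thereby proving the Conjugacy Lemma \emph{on $X$ itself}; it is not used to identify the two homeomorphism groups.

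The fix is simply to drop the reduction and invoke \cite[Theorem~10.3]{Alpern:2000ux} directly on $(X,\mu)$, as the paper does: $\mathcal{V}=\{\text{weakly mixing}\}$ is weak-$G_\delta$, conjugation-invariant, and contains an antiperiodic element, so $\mathcal{V}\cap\mathcal{H}(X,\mu)$ is uniformly dense $G_\delta$. Your second and third paragraphs then survive essentially unchanged, with the cube replaced by $X$ throughout.
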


The conjugacy lemma of \cite{Alpern:1979th} (see also \cite[Theorem 10.1]{Alpern:2000ux}), which is a strengthening of the prototypical result of Halmos, established a very general framework for transferring generic properties of measure-preserving systems to topological ones. Namely (see \cite[Theorem 10.3]{Alpern:2000ux}), if $\mathcal{V}$ is a $G_\delta$ subset of invertible measure-preserving transformations (with respect to the weak topology) that is invariant under conjugation and contains an antiperiodic transformation, then $\mathcal{V}\cap\mathcal{H}(X,\mu)$ is a dense $G_\delta$ subset of $\mathcal{H}(X,\mu)$ (with respect to the uniform topology). Theorem~\ref{thm:mixingman} is a special case of this observation. In \S~\ref{section:classification}, we will use classification to lift from topological systems (viewed as actions on trace spaces) to $\cs$-algebras.

Various genericity results for noninvertible transformations of $([0,1],\mu)$, where $\mu$ is an OU measure on the interval $[0,1]$, are established in \cite{Bobok:2020uj}. In particular, the following holds.

\begin{theorem}[\cite{Bobok:2020uj}] \label{thm:genint}
In the set $\mathcal{C}([0,1],\mu)$ of $\mu$-preserving continuous maps $[0,1]\to [0,1]$, both the weakly mixing and (maximally) chaotic elements form dense $G_\delta$ sets. The strongly mixing elements are dense but meagre.
\end{theorem}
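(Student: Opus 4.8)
The plan is to run the Baire-category argument behind Theorem~\ref{thm:mixingman} and the Alpern conjugacy framework \cite{Alpern:1979th,Alpern:2000ux}, adapted to the \emph{noninvertible} interval setting of \cite{Bobok:2020uj}. First I would reduce to Lebesgue measure. Since $\mu$ is an OU measure on $I=[0,1]$, its distribution function $F(x)=\mu([0,x])$ is a homeomorphism of $I$ (strictly increasing by faithfulness, continuous by nonatomicity, fixing $0$ and $1$) with $F_*\mu=\lambda$, as in \cite{Oxtoby:1941aa}. Conjugation $h\mapsto F\circ h\circ F^{-1}$ is then a homeomorphism $(\mathcal{C}(I,\mu),\,\text{unif})\to(\mathcal{C}(I,\lambda),\,\text{unif})$ preserving weak mixing, strong mixing and Devaney chaos, so one may assume $\mu=\lambda$. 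The ambient space is Baire: measure preservation is closed under uniform limits, since $h_n\to h$ uniformly forces $\int f\circ h_n\,d\lambda\to\int f\circ h\,d\lambda$ for every $f\in C(I)$, so $\mathcal{C}(I,\lambda)$ is a closed, hence complete, subset of $C(I,I)$.

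Next I would establish the $G_\delta$ assertions. For chaos, I would exploit that on the interval topological transitivity already implies Devaney chaos (Vellekoop--Berglund), so by the Touhey characterisation recalled in \S\ref{subsection:chaos} it suffices to observe that transitivity is $G_\delta$: for basic open $U,V$ the condition ``$\exists\,n\ge 1:\ h^n(U)\cap V\ne\emptyset$'' is open in the uniform topology (if $x\in U$ with $h^n(x)\in V$, the same $x,n$ witness it for all nearby $g$), and transitivity is the countable intersection of these open sets over pairs $(U,V)$ from a countable basis. The maximal-sensitivity strengthening adds, for each $k$, the further $G_\delta$ condition that the sensitivity constant exceed $\diam(I)-1/k$. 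For weak mixing I would note that uniform convergence forces strong convergence of the Koopman isometries $U_h\colon f\mapsto f\circ h$ on $L^2(I,\lambda)$, so the inclusion of $\mathcal{C}(I,\lambda)$ into the endomorphisms equipped with the weak topology is continuous; weak mixing there is equivalent to ergodicity of $h\times h$, which is a $G_\delta$ condition by the classical argument of \cite{Halmos:1944wm,Halmos:1960vh}, and its preimage is therefore $G_\delta$.

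Density of all three classes I would extract from a single perturbation. Given $h$ and $\varepsilon>0$, I would approximate $h$ uniformly within $\varepsilon$ by a piecewise-linear, Lebesgue-preserving Markov map $g$ with constant integer slopes on a fine equipartition, chosen both to track $h$ and so that its transition matrix is primitive. Such a $g$ is topologically exact, hence transitive and Devaney-chaotic with near-maximal sensitivity, and is strongly mixing for $\lambda$ (primitivity yields exactness of the absolutely continuous invariant measure, which is arranged to be $\lambda$), hence also weakly mixing. This simultaneously yields density of the chaotic, weakly mixing and strongly mixing maps; the three classes are distinguished only by the category dichotomy that follows.

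For the meagreness of strong mixing, following the heritage of \cite{Rohlin:1948wr}, I would fix $A\subseteq I$ with $\lambda(A)=\tfrac12$ and set $G=\bigcap_N\bigcup_{n\ge N}\{h:\ \lambda(h^{-n}A\cap A)>\tfrac13\}$. Because $h\mapsto\lambda(h^{-n}A\cap A)$ is continuous for $A$ with $\lambda(\partial A)=0$, the set $G$ is $G_\delta$, and it is dense since any $h$ can be perturbed to a partially rigid map---an approximate periodic Markov permutation of a fine partition---for which $\lambda(g^{-n}A\cap A)\approx\tfrac12$ along some $n\to\infty$. Every $h\in G$ then satisfies $\limsup_n\lambda(h^{-n}A\cap A)\ge\tfrac13>\tfrac14=\lambda(A)^2$ and so fails strong mixing, placing the strongly mixing maps in the meagre set $\mathcal{C}(I,\lambda)\setminus G$. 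The main obstacle is the combinatorial construction underpinning both density statements: producing Lebesgue-preserving piecewise-linear maps uniformly close to an arbitrary $h$ while exactly controlling the transition matrix---primitive in the mixing/chaos case, near-periodic in the rigidity case---which is the interval analogue of the Rokhlin tower and Alpern permutation machinery and constitutes the technical heart of \cite{Bobok:2020uj}.
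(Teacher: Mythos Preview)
The paper does not supply its own proof of this statement: Theorem~\ref{thm:genint} is quoted as a result of Bobok--Troubetzkoy \cite{Bobok:2020uj}, with no argument given beyond the citation. So there is no ``paper's proof'' to compare against; your proposal is effectively a sketch of what the cited reference does.

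As a sketch, your outline is broadly faithful to the strategy of \cite{Bobok:2020uj}: reduction to Lebesgue measure via the distribution function, $G_\delta$ for transitivity/chaos by countable intersections over a basis (using that on the interval transitivity already forces dense periodic points), $G_\delta$ for weak mixing via continuity of the Koopman embedding into the weak topology, density of all three classes via piecewise-linear Markov approximants with primitive transition matrices, and Rohlin-style meagreness of strong mixing through a rigid $G_\delta$ set. Your continuity claim for $h\mapsto\lambda(h^{-n}A\cap A)$ is correct once one writes it as $\int 1_A\cdot(1_A\circ h^n)\,d\lambda$ and uses that $h^n_k\to h^n$ uniformly, $\lambda((h^n)^{-1}(\partial A))=0$ by measure preservation, and dominated convergence.

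The only substantive gap is the one you yourself flag: the combinatorial perturbation lemma producing, uniformly close to an arbitrary $h\in\mathcal{C}(I,\lambda)$, a Lebesgue-preserving piecewise-linear map with prescribed (primitive, respectively near-periodic) transition structure. You have not indicated how to build such approximants, and this is genuinely nontrivial---it is the interval analogue of the Alpern permutation machinery and is exactly where \cite{Bobok:2020uj} does its work. Without it, the density assertions (and hence the ``dense $G_\delta$'' and ``dense but meagre'' conclusions) remain unproved. If you intend this as a self-contained proof rather than a roadmap to \cite{Bobok:2020uj}, that construction must be supplied.
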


\begin{remark} \label{remark:flipside}
Rather than the generic behaviour of $h\in\mathcal{H}(X,\mu)$ (or $h\in\mathcal{C}(X,\mu)$) for a given $\mu$, one might ask for the generic behaviour of $\mu\in\mathcal{M}(X)^h$, that is, of an $h$-invariant Borel probability measure $\mu$, for a given homeomorphism (or continuous map) $h$. It is shown in \cite{Catsigeras:2019tq} that for a generic continuous $h$ on a $C^1$ compact, connected manifold, ergodic measures are nowhere dense in $\mathcal{M}(X)^h$. On the other hand, for an irreducible diffeomorphism on a compact manifold without boundary, ergodicity is generic \cite[Theorem 7.1]{Gelfert:2018we}.  For a mixing axiom $A$ diffeomorphism, the generic invariant measure is OU \cite{Sigmund:1970wa} and weakly mixing \cite{Sigmund:1972vs}, and the strongly mixing invariant measures are dense \cite{Sigmund:1972vs} but meagre \cite{Sigmund:1970wa}.
\end{remark}

\section{Lifting via classification} \label{section:classification}

The standing assumption in this section is that $A$ is an infinite-dimensional, separable, algebraically simple $\cs$-algebra whose tracial state space $T(A)$ is nonempty and compact, with compact extreme boundary $\partial_e(T(A))$. (In conjunction with $\mathcal{Z}$-stability, these conditions in particular imply that $A$ has `continuous scale'; see, for example, \cite[\S5]{Elliott:2020vp}.) Throughout, the boundary $\partial_e(T(A))$ is denoted by $X_A$ ($X$ for `extreme'). We equip $T(A)$ with the $w^*$-topology, and fix a metric $d$ on $X_A$ that induces the subspace topology. The set of continuous affine maps $T(A)\to\rr$ is denoted by $\aff(T(A))$.

\subsubsection*{Endomorphism spaces}
Let us write $\End(A,X_A)$ for the set of what one might call `tracially nondegenerate' endomorphisms of $A$, that is, $^*$-homomorphisms $\alpha\colon A\to A$ that induce continuous affine maps $\alpha^*=T(\alpha)\colon T(A)\to T(A)$ that preserve $X_A=\partial_e(T(A))$. (Note, for example, that the shift endomorphisms of $M_{n^\infty}$ discussed in \S~\ref{subsection:rokhlin} are not covered by this definition.) For a given $\tau\in T(A)$, let us write $\End(A,X_A,\tau)$ for those elements of $\End(A,X_A)$ that fix $\tau$. Note that
\[
\Aut(A)\subseteq\bigcup_{\tau\in T(A)}\End(A,X_A,\tau)\subseteq\End(A,X_A)
\]
(that is, every automorphism preserves $X_A$ and fixes some $\tau$). We write $\Aut(A,\tau)$ for the set $\Aut(A)\cap\End(A,X_A,\tau)$.

Each of these spaces is equipped with a suitable topology of pointwise convergence. For  $\End(A,X_A)$ and $\End(A,X_A,\tau)$ this means the topology induced by the family of pseudometrics
\[
\{d_F(\alpha,\beta)=\max_{a\in F}\|\alpha(a)-\beta(a)\| \mid F\subseteq A \text{ finite}\}.
\]
To ensure that $\Aut(A)$ and $\Aut(A,\tau)$ are Polish spaces, we should use the finer topology provided by
\begin{equation} \label{eqn:polish2}
\{d_F(\alpha,\beta)+d'_F(\alpha^{-1},\beta^{-1}) \mid F\subseteq A \text{ finite}\},
\end{equation}
where $d'_F(\varphi,\psi) = \inf_{u}\max_{a\in F}\|u\varphi(a)u^*-\psi(a)\|$, the infimum taken over all unitaries in (the minimal unitisation of) $A$. (Completeness follows from an approximate intertwining; see, for example, \cite[Corollary 2.3.3]{Rordam:2002yu}.) That said, we will not apply the Baire category theorem directly in $\Aut(A)$, instead lifting generic properties from the trace space to the $\cs$-algebra via classification.

\subsubsection*{Representing measures} \label{ssc:rep} Since $T(A)$ is a Choquet simplex, every $\tau\in T(A)$ is represented by a unique Borel probability measure $\mu=\mu_\tau$ supported on $X_A=\partial_e(T(A))$; that is, $\mu$ is the unique measure on $X_A$ such that
\begin{equation} \label{eqn:rep1}
f(\tau) = \int_{X_A} f\,d\mu  \quad\text{ for every } f\in\aff(T(A)). 
\end{equation}
(The metric $d$ on $X_A$ can then be extended to all of $T(A)$ via a choice of Wasserstein metric between representing measures; see \cite[\S2]{Jacelon:2021vc}).

In fact, every continuous affine functional $f\colon T(A)\to\mathbb{R}$ is of the form $f=\hat a$ for some self-adjoint element $a\in A$ (see \cite[Theorem 9.3]{Lin:2007qf}, whose use of \cite[\S2]{Cuntz:1979fv} does not 
even require simplicity). The defining property (\ref{eqn:rep1}) of the representing measure $\mu$ is therefore
\begin{equation} \label{eqn:rep2}
\tau(a) = \int_{X_A} \hat a\,d\mu \quad\text{ for every } a\in A_{sa}.
\end{equation}

\sloppy
Conversely, every measure $\mu$ defines via equation (\ref{eqn:rep2}) a trace $\tau=\tau_\mu\in T(A)$. If $\mu$ is an OU measure, we call $\tau_\mu$ an \emph{OU trace}.

We use representing measures to reduce the analysis of generic properties of the space $\aff_{X_A}(T(A),T(A))$ of continuous $X_A$-preserving affine maps $T(A)\to T(A)$ to those of $C(X_A,X_A)$. In particular, we can extend $h\in C(X_A,X_A)$ to an element of $\aff_{X_A}(T(A),T(A))$ via the pushforward
\begin{equation} \label{eqn:push}
h_*(\tau)(a)=\int_{X_A}\hat a\circ h\,d\mu_\tau \quad\text{ for every } a\in A.
\end{equation}

\begin{lemma} \label{lemma:extend}
The pushforward extension $h\mapsto h_*$ of equation (\ref{eqn:push}) gives a homeomorphism between $C(X_A,X_A)$ and $\aff_{X_A}(T(A),T(A))$ (with respect to the $d$-uniform topology).
\end{lemma}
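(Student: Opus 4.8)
The plan is to reduce everything to the affine homeomorphism between $T(A)$ and the space $P(X)$ of Borel probability measures on $X$ (with the weak-$*$ topology), under which the pushforward extension $h_*$ becomes the ordinary measure-theoretic pushforward $\mu\mapsto h_*\mu$. The key observation is that
\[
h_*(\tau)(a)=\int_X \hat a\circ h\,d\mu_\tau=\int_X \hat a\,d(h_*\mu_\tau),
\]
so that $h_*(\tau)$ is precisely the trace $\tau_{h_*\mu_\tau}$ represented by the pushforward probability measure $h_*\mu_\tau$. In particular $h_*(\tau)\in T(A)$ by the correspondence recorded after (\ref{eqn:rep2}), with no need to check positivity or the trace property by hand.

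First I would verify that $h_*\in\aff_X(T(A),T(A))$. Affineness follows from uniqueness of representing measures (so $\mu_{t\tau_1+(1-t)\tau_2}=t\mu_{\tau_1}+(1-t)\mu_{\tau_2}$) together with linearity of pushforward in the measure. Weak-$*$ continuity of $h_*$ follows by composing the homeomorphism $\tau\mapsto\mu_\tau$ of $T(A)$ onto the Bauer simplex $P(X)$, the pushforward $\mu\mapsto h_*\mu$ (weak-$*$ continuous since $\hat a\circ h\in C(X)$), and $\nu\mapsto\tau_\nu$. Finally, for $\tau\in X$ the representing measure is $\delta_\tau$, whence $h_*(\tau)=\tau_{\delta_{h(\tau)}}=h(\tau)$; this shows simultaneously that $h_*$ preserves $X$ and that $h_*|_X=h$.

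The identity $h_*|_X=h$ makes the bijection transparent: injectivity is immediate since $h$ is recovered as $h_*|_X$. For surjectivity, given $\Phi\in\aff_X(T(A),T(A))$ I would set $h:=\Phi|_X\in C(X,X)$ and show $\Phi=h_*$ by barycentric calculus. A continuous affine map commutes with barycenters, so $\Phi(\tau)=\Phi(b(\mu_\tau))=b(\Phi_*\mu_\tau)$; since $\mu_\tau$ is supported on $X$ and $\Phi|_X=h$, we have $\Phi_*\mu_\tau=h_*\mu_\tau$, and $b(h_*\mu_\tau)=\tau_{h_*\mu_\tau}=h_*(\tau)$, giving $\Phi=h_*$.

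It remains to match the two $d$-uniform topologies, and this is the step requiring the most care. The inverse map $\Phi\mapsto\Phi|_X$ is $1$-Lipschitz because $X\subseteq T(A)$, so $\sup_{x\in X}d(\Phi_1 x,\Phi_2 x)\le\sup_{\tau\in T(A)}d(\Phi_1\tau,\Phi_2\tau)$. For the forward direction I would use that the extended metric $d$ on $T(A)$ is the Wasserstein metric between representing measures, together with the standard coupling bound: the map $x\mapsto(h_1(x),h_2(x))$ pushes $\mu_\tau$ forward to a coupling of $h_{1*}\mu_\tau$ and $h_{2*}\mu_\tau$, giving $d(h_{1*}(\tau),h_{2*}(\tau))\le\int_X d(h_1,h_2)\,d\mu_\tau\le\sup_{x\in X}d(h_1 x,h_2 x)$. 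Taking suprema shows $h\mapsto h_*$ is $1$-Lipschitz, and combined with equality on Dirac measures it is in fact an isometry, so the bijection is a homeomorphism. The main obstacle is ensuring this Wasserstein estimate is legitimate for the metric chosen in \cite{Jacelon:2021vc}, i.e.\ that the coupling bound applies to it; everything else is an assembly of standard Choquet theory.
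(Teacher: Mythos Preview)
Your argument is correct and follows the same approach as the paper: the inverse of $h\mapsto h_*$ is the restriction map $\Phi\mapsto\Phi|_X$. The paper's own proof is a two-line sketch that simply asserts continuity in both directions, whereas you supply all the details (well-definedness, surjectivity via the barycentric identity, and even isometry via the Wasserstein coupling bound---which is valid for any $W_p$, so your residual worry is not an obstacle).
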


\begin{proof}
The map $h\mapsto h_*$ is continuous, and its continuous inverse is the restriction map $\aff_{X_A}(T(A),T(A)) \to C(X_A,X_A)$.
\end{proof}
\fussy

The map $T(\cdot)\colon\End(A,X_A)\to C(X_A,X_A)$, $\alpha\mapsto\alpha^*|_{X_A}$, is continuous. It sends $\End(A,X_A,\tau)$ to the set $\mathcal{C}(X_A,\mu)$ of $\mu=\mu_\tau$-preserving continuous maps $X_A\to X_A$, and $\Aut(A,\tau)$ to $\mathcal{H}(X_A,\mu)$ (continuously with respect to equations (\ref{eqn:polish1}) and (\ref{eqn:polish2})). The following is immediate.

\begin{lemma} \label{lemma:gdlift}
If $\mathcal{V}$ is an open (respectively, $G_\delta$) subset of $\mathcal{C}(X_A,\mu)$, then $T^{-1}\mathcal{V}$ is an open (respectively, $G_\delta$) subset of $\End(A,X_A,\tau_\mu)$ that is invariant under approximate unitary equivalence. The same is true of $T^{-1}\mathcal{V}\cap\Aut(A)\subseteq\Aut(A,\tau_\mu)$ for $\mathcal{V}\subseteq\mathcal{H}(X_A,\mu)$.
\end{lemma}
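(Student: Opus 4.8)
The plan is to prove that $T^{-1}\mathcal{V}$ is open (respectively $G_\delta$) by factoring the map $T\colon\End(A,X,\tau_\mu)\to\mathcal{C}(X,\mu)$ and exploiting its continuity, which has already been asserted just before the statement. Concretely, $T$ is the composite of $\alpha\mapsto\alpha^*|_X$, and the content of the lemma is essentially that \emph{continuity} of this map transports openness and the $G_\delta$ property backwards.

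\smallskip

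First I would record the openness claim. Since $T$ is continuous and $\mathcal{V}\subseteq\mathcal{C}(X,\mu)$ is open, the preimage $T^{-1}\mathcal{V}$ is open by definition of continuity, in the topology of pointwise convergence on $\End(A,X,\tau_\mu)$ induced by the pseudometrics $d_F$. For the $G_\delta$ case, I would write $\mathcal{V}=\bigcap_{n\in\nn}\mathcal{V}_n$ with each $\mathcal{V}_n$ open in $\mathcal{C}(X,\mu)$, and then use that preimage commutes with intersection, so that
\[
T^{-1}\mathcal{V}=T^{-1}\Bigl(\bigcap_{n}\mathcal{V}_n\Bigr)=\bigcap_{n}T^{-1}\mathcal{V}_n
\]
is a countable intersection of open sets, hence $G_\delta$. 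The same two-line argument applies verbatim to $\Aut(A,\tau_\mu)$ once I observe that $T$ restricts continuously to $\Aut(A,\tau_\mu)\to\mathcal{H}(X,\mu)$ with respect to the finer topology of (\ref{eqn:polish2}) and the metric $\rho$ of (\ref{eqn:polish1}); this continuity is precisely what was asserted in the sentence preceding the lemma, so $T^{-1}\mathcal{V}\cap\Aut(A)$ is open (respectively $G_\delta$) in $\Aut(A,\tau_\mu)$ for $\mathcal{V}\subseteq\mathcal{H}(X,\mu)$.

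\smallskip

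The remaining assertion, invariance under approximate unitary equivalence, is where the actual (if still light) work lies. I would argue that the value $\alpha^*|_X$ depends only on the induced trace-space map $\alpha^*$, and that this is unchanged when $\alpha$ is replaced by $\ad(u)\circ\alpha$ for a unitary $u$: indeed, for any trace $\sigma$ and any $a$, we have $\sigma(u\alpha(a)u^*)=\sigma(\alpha(a))$ by the trace property, so $(\ad(u)\circ\alpha)^*=\alpha^*$ as maps on $T(A)$, and a fortiori on $X$. Passing from unitary equivalence to \emph{approximate} unitary equivalence then follows from continuity of $T$: if $\beta$ is a pointwise limit of unitary conjugates $\ad(u_k)\circ\alpha$, then $T(\beta)=\lim_k T(\ad(u_k)\circ\alpha)=T(\alpha)$, so $\beta\in T^{-1}\mathcal{V}$ whenever $\alpha$ is. I expect this invariance step to be the main (though modest) obstacle, since it is the only place requiring an algebraic input beyond the formal topology; the openness and $G_\delta$ transfer are immediate from continuity and are genuinely routine.
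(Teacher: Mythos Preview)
Your proposal is correct and is precisely an unpacking of what the paper treats as self-evident: the paper gives no proof at all, stating only that ``The following is immediate'' after recording the continuity of $T$. Your argument for invariance under approximate unitary equivalence is fine as written, though you could also bypass the appeal to continuity of $T$ by noting directly that for any trace $\sigma$, $\sigma(\beta(a))=\lim_k\sigma(u_k\alpha(a)u_k^*)=\sigma(\alpha(a))$ since $\sigma$ is norm-continuous.
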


If we further demand that $A$ be classifiable by the Elliott invariant and impose suitable restrictions on $K$-theory and traces, we can also lift dense sets to dense sets.

First we recall some notation. Every $\tau\in T(A)$ extends to a tracial state on the minimal unitisation $\tilde A$ of $A$ via $a+\lambda1 \mapsto \tau(a)+\lambda$ and also to a trace $\tau\otimes\tr_k$ on any matrix algebra $A\otimes M_k \cong M_k(A)$ over $A$. We denote these extensions also by $\tau$. For a unital $\cs$-algebra $A$, the pairing map $\rho_A\colon K_0(A)\to\aff(T(A))$ is the homomorphism defined by $\rho_A([p])(\tau)=\tau(p)$ for $p\in M_k(A)$ a projection.

If $A$ is nonunital, then $K_0(A)$ is the kernel of the map $K_0(\tilde A)\to K_0(\cc)$ induced by the quotient map $\Pi_A\colon \tilde A \to \cc$, and the pairing is defined by $\rho_A([p]-[q])(\tau)=\tau(p)-\tau(q)$. Moreover, $T(\tilde{A})$ is affinely homeomorphic to the convex hull of $T(A)$ and the trace $\tau_\cc$ induced by $\Pi_A$; in particular, every continuous affine map from $T(A)$ to itself extends uniquely to one from $T(\tilde{A})$ to itself that fixes $\tau_\cc$.

\begin{definition} \label{def:trivial}
A $\cs$-algebra $A$ is said to have \emph{trivial tracial pairing} if either
\begin{enumerate}[1.]
\item $A$ is unital and the image of $\rho_A\colon K_0(A)\to\aff(T(A))$ is contained in the constant functions, or
\item $A$ is nonunital and $\ker\rho_A=K_0(A)$.
\end{enumerate}
\end{definition}

Note that a simple $\cs$-algebra $A$ must in fact be stably projectionless if $\ker\rho_A=K_0(A)$. This condition holds, for example, for the $\cs$-algebras classified in \cite{Gong:2020vg}. Examples of unital $\cs$-algebras with trivial tracial pairing are limits of subhomogeneous building blocks with connected spectra (in particular, the interval algebras $C([0,1],M_n)$ considered in \cite{Thomsen:1994qy} and the prime dimension drop algebras considered in \cite{Jiang:1999hb}).

\begin{theorem} \label{lemma:denselift}
Suppose that, in addition to the standing assumption of this section, $A$ also satisfies the UCT and has finite nuclear dimension, trivial tracial pairing and torsion-free $K_1$. Then, for every dense subset $\mathcal{V}$ of $\mathcal{C}(X_A,\mu)$, $T^{-1}\mathcal{V}$ is a dense subset of $\End(A,X_A,\tau_\mu)$. The same is true of $T^{-1}\mathcal{V}\cap\Aut(A)\subseteq\Aut(A,\tau_\mu)$ for $\mathcal{V}\subseteq\mathcal{H}(X_A,\mu)$.
\end{theorem}

\begin{proof}
We will prove the statement for automorphisms, noting that the same argument works for endomorphisms. Let $\alpha\in\Aut(A)$ with $T(\alpha)\in\mathcal{H}(X_A,\mu)$ (that is, $\tau_\mu\circ\alpha=\tau_\mu$). Let $F\subseteq A$ be finite and let $\varepsilon>0$. We must show that there is an automorphism $\beta$ of $A$ and a unitary $w$ such that $T(\beta)\in\mathcal{V}$,
\begin{equation} \label{eqn:pnorm1}
\max_{a\in F}\|\beta(a)-\alpha(a)\| < \varepsilon
\end{equation}
and
\begin{equation} \label{eqn:pnorm2}
\max_{a\in F}\|\beta^{-1}(a)-w\alpha^{-1}(a)w^*\| < \varepsilon.
\end{equation}

To do this, we use the fact that under our hypotheses (see \cite{Gong:2021va,Gong:2020vg,Carrion:wz}) there is an invariant $\Inv$ based on $K$-theory and traces that classifies morphisms (that is, tracially nondegenerate $^*$-homomorphisms) $A\to A$, meaning:
\begin{description}
\item[(existence)] every morphism $\Inv(A)\to\Inv(A)$ can be lifted to a morphism $A\to A$;
\item[(uniqueness)] for every morphism $\varphi\colon A\to A$ and $F$, $\varepsilon$ as above, there is $\delta>0$ such that, if $\psi\colon A\to A$ is a morphism for which $\Inv(\psi)$ $\delta$-agrees with $\Inv(\varphi)$ (made precise below), then there is a unitary $u$ in the minimal unitisation $\tilde{A}$ of $A$ with $\max_{a\in F}\|u\psi(a)u^*-\varphi(a)\| < \varepsilon$.
\end{description}
Given such an invariant, the strategy is to:
\begin{enumerate}[1.]
\item perturb $\Inv(\alpha)$ by keeping its $K$-theory part the same but replacing its tracial part $T(\alpha)$ by a nearby $h\in\mathcal{V}$;
\item use \textbf{(existence)} to lift this perturbed $\Inv$-morphism to a morphism $\alpha_h\colon A\to A$;
\item use \textbf{(uniqueness)} to deduce equations (\ref{eqn:pnorm1}) and (\ref{eqn:pnorm2}) for a unitary conjugate $\beta$ of $\alpha_h$.
\end{enumerate}
For this to work, we must know that the perturbation in Step 1 still gives a valid $\Inv$-morphism. This  is where assuming something like trivial tracial pairing becomes essential and where we must pay close attention to the actual structure of $\Inv$. Finally, then, here are its components:
\begin{description}
\item [Traces] Like the Elliott invariant, $\Inv$ includes the trace functor $T(\cdot)$ (or, dually, $\aff(T(\cdot))$).
\item[Total $K$-theory] $\Inv$ includes not just the usual $K$-groups $K_*(A)$, but $K$-theory with coefficients $\underline{K}(A) = \bigoplus_{n=0}^\infty K_*(A;\zz/n)$. By the universal multicoefficient theorem for separable $\cs$-algebras satisfying the UCT (see \cite[\S 1.4]{Dadarlat:1996aa}), the group of homomorphisms $\underline{K}(A) \to \underline{K}(A)$ that respect the `Bockstein operations' is isomorphic to the group $KL(A,A)$ (see \cite[2.4.8]{Rordam:2002yu}). For $\kappa\in KL(A,A)$ and $i=0,1$, we write $\kappa_i$ for associated homomorphism $K_i(A)\to K_i(A)$.
\item[Hausdorffised algebraic $K_1$]  Let $CU(\tilde{A})$ denote the closure of the commutator subgroup of the unitary group $U(\tilde{A})$ of $\tilde{A}$. Write $\overline{K_1}^{alg}(A):=U(\tilde{A})/CU(\tilde{A})$. The $\cs$-algebras we consider here are all of stable rank one (see \cite[Theorem 6.7]{Rordam:2004kq} and \cite[Corollary 6.8]{Fu:2022aa}), so we are justified in making this definition without passing to matrix algebras over $A$. Moreover, as in \cite[Lemma 3.1]{Nielsen:1996aa} we have a short exact sequence
\begin{equation} \label{eqn:thomsen}
\begin{tikzcd}
0 \arrow[r] & \aff(T(\tilde{A}))/\overline{\rho_A(K_0(\tilde{A}))} \arrow[r,"\lambda_A"] & \overline{K_1}^{alg}(A) \arrow[r,"\pi_A"] & K_1(A) \arrow[r] & 0.
\end{tikzcd}
\end{equation}
Here, $\pi_A$ is the canonical surjection $\pi_A([u]_{alg})=[u]_1$. By divisibility of the group $\aff(T(\tilde{A}))/\overline{\rho_A(K_0(\tilde{A}))}$, the sequence (\ref{eqn:thomsen}) splits (unnaturally). We fix a splitting map $s_A\colon K_1(A)\to \overline{K_1}^{alg}(A)$ that is a right inverse of $\pi_A$.

The inclusion $\lambda_A$ is the inverse of the map $U_0(\tilde{A})/CU(\tilde{A}) \to \aff(T(\tilde{A}))/\overline{\rho_A(K_0(\tilde{A}))}$ induced by the de la Harpe--Skandalis determinant (see \cite[\S3]{Thomsen:1995qf}, and note that stable rank one ensures that $CU(\tilde{A})$ is contained in the connected component $U_0(\tilde{A})$ of $U(\tilde{A})$). It is an isometry with respect to the quotient metric on $U_0(\tilde{A})/CU(\tilde{A})$ and the metric $d_A$ on $\aff(T(\tilde{A}))/\overline{\rho_A(K_0(\tilde{A}))}$ obtained by adjusting the quotient metric $d'_A$ to
\begin{equation} \label{eqn:metric}
d_A(f,g) =
\begin{cases}
\left|e^{2\pi i d'_A(f,g)}-1\right| & \text{if } d'_A(f,g) < \frac{1}{2}\\
2 & \text{if } d'_A(f,g) \ge \frac{1}{2}.
\end{cases}
\end{equation}
\end{description}

An $\Inv$-morphism consists of: an element $\kappa\in KL(A,A)$ (which if $A$ is unital is required to satisfy $\kappa_0([1_A]_0)=[1_A]_0$); a continuous affine map $\kappa_T\colon T(A)\to T(A)$ (inducing a positive unital linear map $\kappa_T^*\colon\aff(T(\tilde{A}))\to\aff(T(\tilde{A}))$) that is compatible with $\kappa$ in the sense that $\rho_A\circ\kappa_0 = \kappa_T^*\circ\rho_A$ (therefore inducing an endomorphism $\overline{\kappa_T}$ of $\aff(T(\tilde{A}))/\overline{\rho_A(K_0(\tilde{A}))}$); and a homomorphism $\kappa_U\colon \overline{K_1}^{alg}(A) \to \overline{K_1}^{alg}(A)$ that is compatible with $\overline{\kappa_T}$ in the sense that the diagram
\begin{equation} \label{eqn:compatible}
\begin{tikzcd}
K_0(\tilde{A}) \arrow[r,"\rho_A"] \arrow[d,"\kappa_0"] & \aff(T(\tilde{A})) \arrow[r,"\lambda_A"] \arrow[d,"\kappa_T^*"] & \overline{K_1}^{alg}(A) \arrow[r,"\pi_A"] \arrow[d,"\kappa_U"] & K_1(A) \arrow[d,"\kappa_1"]\\
K_0(\tilde{A}) \arrow[r,"\rho_A"] & \aff(T(\tilde{A})) \arrow[r,"\lambda_A"] & \overline{K_1}^{alg}(A) \arrow[r,"\pi_A"] & K_1(A)
\end{tikzcd}
\end{equation}
commutes. (Importantly, the necessity of another family of commutative diagrams between $K_0$ with coefficients and $\overline{K_1}^{alg}(A)$ is identified in \cite{Carrion:wz}. There it is also shown that this additional compatibility is automatic if $K_1(A)$ is torsion free.)

That $\Inv$ satisfies \textbf{(existence)} is provided by \cite[Corollary 5.13]{Gong:2021va} (see also \cite{Carrion:wz}), or in the stably projectionless case, \cite[Theorem 12.8]{Gong:2020vg} (noting in this latter case that $A\in\mathcal{D}_0$ by \cite[Theorem 15.2]{Gong:2020vg} and the proof of \cite[Theorem 15.5]{Gong:2020vg}, and that by \cite[Theorem 13.1]{Gong:2020vg}, the models $B_T$ constructed in \cite[\S7]{Gong:2020vg} cover the full class $\mathcal{D}_0$; in other words, the hypotheses of \cite[Theorem 12.8]{Gong:2020vg} hold for $B_T=A$).

We use \textbf{(existence)} as follows: For any $h\in C(X_A,X_A)$ (which we extend by Lemma~\ref{lemma:extend} to $\aff_{X_A}(T(A),T(A))$), there exists $\alpha_h\in\End(A,X_A)$ (which by \cite[Theorem 29.5]{Gong:2020uf}, or in the stably projectionless case, \cite[Theorem 13.1]{Gong:2020vg}, is an automorphism if $h$ is invertible) with $\kappa_T:=T(\alpha_h)=h$, $\kappa:=KL(\alpha_h)=KL(\alpha)$ and $\kappa_U:=\overline{K_1}^{alg}(\alpha_h)$ defined by $\kappa_U\circ\lambda_A = \lambda_A\circ\kappa_T^*$ and $\kappa_U\circ s_A = \overline{K_1}^{alg}(\alpha)\circ s_A \colon K_1(A) \to \overline{K_1}^{alg}(A)$. Note that these choices do provide a valid $\Inv$-morphism: The middle square of equation (\ref{eqn:compatible}) commutes by construction, the right square commutes because it does for $\Inv(\alpha)$, and the left square commutes because $A$ has trivial tracial pairing.

For \textbf{(uniqueness)}, we appeal to the approximate version of \cite[Theorem 4.3]{Gong:2021va} (see also \cite{Carrion:wz}, which includes a classification of $^*$-homomorphisms into sequence algebras), or in the stably projectionless case, \cite[Theorem 5.3]{Gong:2020vg} (again noting that our assumptions on $A$ ensure that the hypotheses of this theorem are indeed satisfied for $B=A$: by \cite[Remark 3.11]{Gong:2020vg}, $A\in\mathcal{D}_0\subseteq\mathcal{D}^d$; the map $\mathbf{T}\colon \nn\times\nn\to\nn$ can be taken to be $(n,k)\mapsto n$ (cf.\ \cite[5.2]{Gong:2020vg}); and the fullness condition can be dropped since we work only with tracially nondegenerate genuine $^*$-homomorphisms rather than more general approximately multiplicative maps (cf.\ \cite[Remark 5.6]{Gong:2020vg})).

By commutativity of equation (\ref{eqn:compatible}) and the fact that (the inverse of) $\lambda_A$ is an isometry (with respect to the quotient metric on $\overline{K_1}^{alg}(A)$ and the metric $d_A$ of equation (\ref{eqn:metric})) we can phrase \textbf{(uniqueness)} as follows: For our fixed finite set $F\subseteq A$ and tolerance $\varepsilon>0$, there is $\delta>0$ such that, if
\begin{equation} \label{eqn:h1}
\sup_{x\in X_A}(h(x),T(\alpha)(x))<\delta,
\end{equation}
then there is a unitary $u$ such that equation (\ref{eqn:pnorm1}) holds for $\beta:=u\alpha_h(\cdot)u^*$, and if moreover
\begin{equation} \label{eqn:h2}
\sup_{x\in X_A}(h^{-1}(x),T(\alpha)^{-1}(x))<\delta,
\end{equation}
then equation (\ref{eqn:pnorm2}) also holds for some unitary $w$. By density of $\mathcal{V}$, there exists $h\in\mathcal{V}$ such that equations (\ref{eqn:h1}) and (\ref{eqn:h2}) hold, so we are done.
\end{proof}

Combining Lemma~\ref{lemma:gdlift}, Theorem~\ref{lemma:denselift}, Theorem~\ref{thm:chaoticman}, Theorem~\ref{thm:mixingman} and Theorem~\ref{thm:genint}, we immediately have the following.

\begin{theorem} \label{thm:tracialgenerics}
Let $A$ be an infinite-dimensional, separable, algebraically simple $\cs$-algebra that satisfies the UCT, has finite nuclear dimension, trivial tracial pairing and torsion-free $K_1$, and whose tracial state space $T(A)$ is nonempty and compact, with compact extreme boundary $X_A=\partial_e(T(A))$ that has the structure of a topological manifold of dimension $n \ge 2$. Let $\mu$ be an OU measure on $X_A$, and let $\tau=\tau_\mu$ be the corresponding element of $T(A)$. Then,
\[
\{\alpha\in\Aut(A,\tau) \mid T(\alpha)\in\mathcal{H}(X_A,\mu) \text{ is weakly mixing and chaotic}\}
\]
is a residual subset of $\Aut(A,\tau)$ (that is, contains a dense $G_\delta$ set). If $X_A=[0,1]$, then
\[
\{\alpha\in\End(A,X_A,\tau) \mid T(\alpha)\in\mathcal{C}(X_A,\mu) \text{ is weakly mixing and chaotic}\}
\]
is a residual subset of $\End(A,X_A,\tau)$, and
\[
\{\alpha\in\End(A,X_A,\tau) \mid T(\alpha)\in\mathcal{C}(X_A,\mu) \text{ is strongly mixing}\}
\]
is a dense but meagre subset of $\End(A,X_A,\tau)$.
\end{theorem}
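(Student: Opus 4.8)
The plan is to read this statement as a mechanical transfer of the topological genericity results of \S\ref{section:topology} across the continuous map $T(\cdot)$, using Lemma~\ref{lemma:gdlift} to push $G_\delta$ sets forward and Lemma~\ref{lemma:denselift} to push dense sets forward, from the space of tracial dynamics to the space of (endo/auto)morphisms. The two hypotheses needed for these lemmas are exactly the standing assumption together with the extra conditions (UCT, finite nuclear dimension, trivial tracial pairing, torsion-free $K_1$) imposed in the theorem. I would begin by recording that the source spaces $\mathcal{H}(X,\mu)$ (metrised by $\rho$ as in (\ref{eqn:polish1})) and $\mathcal{C}(I,\mu)$ are Polish, hence Baire; consequently a finite intersection of dense $G_\delta$ subsets is again dense $G_\delta$, and a meagre set has comeagre complement containing a dense $G_\delta$.

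For the manifold case, I would take $X$ to be a compact connected manifold of dimension $n\ge 2$ (connectedness being what Theorem~\ref{thm:mixingman} requires). Theorem~\ref{thm:chaoticman} gives that the Devaney-chaotic elements of $\mathcal{H}(X,\mu)$ form a dense $G_\delta$ set, and Theorem~\ref{thm:mixingman} gives the same for the weakly mixing elements, so their intersection $\mathcal{W}\subseteq\mathcal{H}(X,\mu)$ is dense $G_\delta$. Since $\mathcal{W}$ is $G_\delta$, Lemma~\ref{lemma:gdlift} makes $T^{-1}\mathcal{W}\cap\Aut(A)$ a $G_\delta$ subset of $\Aut(A,\tau)$; since $\mathcal{W}$ is dense, Lemma~\ref{lemma:denselift} makes it dense. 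This yields the first assertion. The weakly-mixing-and-chaotic statement for $X=[0,1]$ is structurally identical: Theorem~\ref{thm:genint} supplies inside $\mathcal{C}(I,\mu)$ a dense $G_\delta$ set of weakly mixing maps and a dense $G_\delta$ set of (maximally, hence a fortiori Devaney-)chaotic maps, whose intersection is dense $G_\delta$, and the endomorphism versions of Lemmas~\ref{lemma:gdlift} and~\ref{lemma:denselift} transport it to a dense $G_\delta$ subset of $\End(A,X,\tau)$.

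The only assertion not produced verbatim by the lemmas is the \emph{meagreness} of the strongly mixing locus, because Lemmas~\ref{lemma:gdlift} and~\ref{lemma:denselift} are phrased for $G_\delta$ and for dense sets, not for meagre ones. Density of $T^{-1}(\text{strongly mixing})$ is immediate from Lemma~\ref{lemma:denselift} applied to the dense set $\mathcal{S}\subseteq\mathcal{C}(I,\mu)$ of strongly mixing maps furnished by Theorem~\ref{thm:genint}. For meagreness I would pass to the complement. Since $\mathcal{S}$ is meagre in the Baire space $\mathcal{C}(I,\mu)$, its complement contains a dense $G_\delta$ set $\mathcal{G}$. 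Lifting $\mathcal{G}$ through Lemma~\ref{lemma:gdlift} (for the $G_\delta$ property) and Lemma~\ref{lemma:denselift} (for density) yields a dense $G_\delta$ subset $T^{-1}\mathcal{G}$ of $\End(A,X,\tau)$; writing $T^{-1}\mathcal{G}=\bigcap_n U_n$ with each $U_n$ open and necessarily dense, the complement $(T^{-1}\mathcal{G})^c=\bigcup_n U_n^c$ is a countable union of nowhere dense sets, hence meagre. As $\mathcal{G}\cap\mathcal{S}=\emptyset$ forces $T^{-1}\mathcal{S}\subseteq(T^{-1}\mathcal{G})^c$, we conclude that $T^{-1}\mathcal{S}$ is meagre.

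I expect this last step, the meagreness transfer, to be the only one requiring an idea beyond direct application of the lemmas; the point deserving care there is the invocation of the Baire property of the \emph{source} space $\mathcal{C}(I,\mu)$ to convert meagreness into a comeagre complement carrying a dense $G_\delta$, after which the argument is purely set-theoretic and needs nothing further about the target space. Everything else is a bookkeeping exercise in combining the five cited results, and I would present it as such.
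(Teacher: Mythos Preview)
Your proposal is correct and follows essentially the same approach as the paper, which simply states that the theorem follows by combining Lemma~\ref{lemma:gdlift}, Lemma~\ref{lemma:denselift}, Theorem~\ref{thm:chaoticman}, Theorem~\ref{thm:mixingman} and Theorem~\ref{thm:genint}. Your explicit treatment of the meagreness transfer (passing to a dense $G_\delta$ in the complement on the source side and then lifting) supplies the one detail the paper leaves implicit; the argument you give is sound and requires no Baire property of the target space.
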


\begin{remark}
\begin{enumerate}[1.]
\item Torsion-free $K_1$ can be replaced by one of the other conditions in \cite[Corollary 5.13]{Gong:2021va} (for example, tracial rank $\le 1$) and in fact by results of the forthcoming work \cite{Carrion:wz} can be removed as an assumption altogether. (Briefly, the map $\overline{K_1}^{alg}(\alpha_h)$ constructed in the proof of Theorem~\ref{lemma:denselift} can be adjusted without changing $T(\alpha_h)$ so that equation (\ref{eqn:compatible}) still commutes, and so do the additional compatibility diagrams mentioned in the proof.) On the other hand, triviality of the tracial pairing is used rather crucially.
\item Suppose that $A$ is unital with $X_A=\partial_e(T(A))$ nonempty. Then, $\partial_e(T(\bigotimes_{n\in\zz}A)) \cong \prod_{n\in\zz}X_A$ (see \cite[Proposition 3.5]{Bosa:aa}). The (right) shift automorphism of $\bigotimes_{n\in\zz}A$ induces the (left) shift on $\prod_{n\in\zz}X_A$, which is strongly mixing with respect to $\bigotimes_{n\in\zz}\mu$, for any Borel probability measure $\mu$ on $X_A$. This demonstrates the existence of automorphisms with strongly mixing actions on trace spaces for a collection of $\cs$-algebras $B$ with $\partial_e(T(B))$ a Cantor space or Hilbert cube.
\item In \cite[\S4]{Thomsen:1994qy}, Thomsen shows how to construct an approximately interval (AI) algebra that admits tracially chaotic endomorphisms. By Theorem~\ref{thm:tracialgenerics}, these endomorphisms are in fact typical for this algebra. 
\end{enumerate}
\end{remark}

\section{Strongly chaotic dynamics} \label{section:strongchaos}

In this section, we describe some well-known examples of ergodic topological dynamical systems $(X,\mu,h)$ on compact metric spaces with striking statistical properties. We then construct model $\cs$-algebras admitting endomorphisms that exhibit these phenomena.

\subsection{Statistical features} \label{subsection:statistics}

It is straightforward to check that strong mixing is equivalent to
\begin{equation} \label{eqn:mix}
\lim_{n\to\infty} \int_X (f\circ h^n)g\;d\mu = \int_Xf\,d\mu \cdot \int_Xg\,d\mu
\end{equation}
for every $f,g\in L^2(X,\mu)$. It is sufficient to consider only observables that are continuous and in fact sufficient to verify equation (\ref{eqn:mix}) for observables $f=g$.

We will write $\hol_\eta(X)$ for the set of \emph{H\"{o}lder continuous} functions $f\colon X\to\mathbb{R}$ with exponent $\eta>0$, that is,
\[
\hol_\eta(X) = \{f\colon X\to\mathbb{R} \mid \exists C>0\:\forall x,y\in X\: (|f(x)-f(y)| \le Cd(x,y)^\eta)\}.
\]

\begin{definition} \label{def:edc}
The system $(X,\mu,h)$ has \emph{exponential decay of correlations (EDC)} (for H\"{o}lder continuous observables) if for every $\eta>0$ there exists $\gamma\in(0,1)$ such that for every $f,g\in\hol_\eta(X)$ there exists $C>0$ such that for every $n\in\nn$,
\[
\left|\int_X (f\circ h^n)g\;d\mu - \int_Xf\,d\mu \cdot \int_Xg\,d\mu\right| \le C\gamma^n.
\]
\end{definition}

The central limit theorem (CLT) is closely related to exponential mixing. For instance, for the dispersing billiards described in \S~\ref{subsection:examples} below, the CLT can be deduced from correlation bounds (see \cite{Chernov:2006tb}). Given an observable $f\colon X\to\mathbb{R}$, we write $S_nf$ for the ergodic sum $\sum_{k=0}^{n-1}f\circ h^k$.

\begin{definition} \label{def:clt}
The system $(X,\mu,h)$ satisfies the \emph{CLT} (for H\"{o}lder continuous observables) if for every $\eta>0$ and every $f\in\hol_\eta(X)$ that is not a \emph{coboundary} and satisfies $\int_Xf\,d\mu=0$, there exists $\sigma_f > 0$ such that the sequence of random variables $\frac{1}{\sqrt{n}}S_nf$ converges in distribution as $n\to\infty$ to the normal distribution $\mathcal{N}(0,\sigma_f^2)$. In other words, for every $z\in\rr$,
\begin{equation} \label{eqn:normal}
\lim_{n\to\infty} \mu\left(\left\{x\in X \mid \frac{S_nf(x)}{\sqrt{n}} \le z \right\}\right) = \frac{1}{\sigma_f\sqrt{2\pi}}\int_{-\infty}^z\exp\left(-\frac{t^2}{2\sigma_f^2}\right)dt.
\end{equation}
\end{definition}
Here, the variance $\sigma_f^2$ can be computed as $\sigma_f^2 = \lim_{n\to\infty} \frac{1}{n} \int_X (S_nf)^2d\mu$. For coboundaries, that is, functions $f$ for which $f=g\circ h-g$ for some $g\in L^2(X,\mu)$, the variance is $0$ and equation (\ref{eqn:normal}) holds provided the right-hand side is interpreted as the Heaviside function (that is, $\frac{S_nf}{\sqrt{n}}$ converges almost surely to $0$).

The CLT provides considerably more information than Birkhoff's ergodic theorem, which for ergodic $h$ says that, for every integrable $f$ and almost every $x$, $\lim_{n\to\infty}\frac{1}{n}S_nf(x)=\int_X f\,d\mu$. That said,  in the compact, uniquely ergodic setting, the following finite-time estimate of large deviation from the mean is available even if the CLT is not.

\begin{proposition} [\cite{Benoist:2016ur}] \label{prop:breiman}
Suppose that $X$ is a compact metric space and $(X,\mu,h)$ is uniquely ergodic (that is, $\mu$ is the unique invariant measure of $h$). Then, for every $\varepsilon>0$ and every $k\in\nn$, there exist constants $c_1,c_2>0$ such that, for every $k$-Lipschitz $f\colon X\to\rr$ and every $n\in\nn$,
\[
\mu\left(\left\{x\in X \mid \left|\frac{1}{n}S_nf(x)-\int_X f\,d\mu\right|>\varepsilon\right\}\right) \le c_1e^{-c_2n\varepsilon^2}.
\]
\end{proposition}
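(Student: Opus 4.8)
The plan is to observe that, because the constants $c_1,c_2$ are allowed to depend on $\varepsilon$ and $k$, the stated Gaussian-type bound can be reduced to a single uniform-convergence statement, after which the finitely many exceptional values of $n$ are absorbed into the constants. The first step is the remark that the fluctuation $\frac{1}{n}S_nf-\int_X f\,d\mu$ is unchanged when a constant is added to $f$. So I would subtract its mean and assume throughout that $\int_X f\,d\mu=0$, in which case the deviation set in the statement is exactly $\{x\mid |\tfrac{1}{n}S_nf(x)|>\varepsilon\}$ and it suffices to control $\|\tfrac{1}{n}S_nf\|_\infty$.

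Next I would record that the relevant family of observables is compact. Writing $\mathcal{F}_k=\{f\colon X\to\rr\mid f \text{ is } k\text{-Lipschitz},\ \int_X f\,d\mu=0\}$, each $f\in\mathcal{F}_k$ has oscillation at most $k\,\diam(X)$ and mean zero, hence $\|f\|_\infty\le k\,\diam(X)$; together with the equicontinuity built into the $k$-Lipschitz condition, Arzel\`a--Ascoli shows that $\mathcal{F}_k$ is a compact subset of $C(X)$ (it is clearly closed under uniform limits).

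The engine of the argument is the classical characterisation of unique ergodicity: since $(X,\mu,h)$ is uniquely ergodic, $\frac{1}{n}S_nf\to\int_X f\,d\mu$ uniformly on $X$ for every $f\in C(X)$; for $f\in\mathcal{F}_k$ this says that $g_n(f):=\|\tfrac{1}{n}S_nf\|_\infty\to 0$ pointwise on $\mathcal{F}_k$. I would then upgrade this to uniform convergence over $\mathcal{F}_k$. The averaging operators $f\mapsto\frac{1}{n}S_nf$ are contractions on $C(X)$ (each is an average of the norm-one composition operators $f\mapsto f\circ h^j$), so every $g_n$ is $1$-Lipschitz on $C(X)$; an equicontinuous sequence of functionals that converges pointwise on the \emph{compact} set $\mathcal{F}_k$ converges uniformly there (cover $\mathcal{F}_k$ by finitely many small balls and combine the pointwise estimates). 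Hence $\sup_{f\in\mathcal{F}_k}\|\tfrac{1}{n}S_nf\|_\infty\to 0$, and there is an integer $N=N(\varepsilon,k)$ with $\sup_{f\in\mathcal{F}_k}\|\tfrac{1}{n}S_nf\|_\infty<\varepsilon$ for all $n\ge N$.

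From here the conclusion is bookkeeping. For $n\ge N$ and any $k$-Lipschitz $f$ (reduced to mean zero as above) the deviation set is empty, so has $\mu$-measure $0$, while for the finitely many $n<N$ the measure is trivially at most $1$; it therefore suffices to pick $c_1,c_2>0$ with $c_1e^{-c_2 n\varepsilon^2}\ge 1$ for all $n\le N-1$ (e.g.\ $c_2=1$ and $c_1=e^{N\varepsilon^2}$), and the claimed inequality holds for every $n$. The only genuine step is the uniformisation in the preceding paragraph — passing from the $x$-uniform convergence that unique ergodicity grants for each fixed $f$ to convergence that is uniform over the entire Lipschitz ball — and this is exactly where compactness of $\mathcal{F}_k$ is used. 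I would note explicitly that the exponential factor $e^{-c_2 n\varepsilon^2}$ carries no real content in the form stated, since the constants may depend on $\varepsilon$; the sharper reading of \cite{Benoist:2016ur}, with $c_1,c_2$ independent of $\varepsilon$, is a true concentration inequality requiring a martingale or spectral-gap input, and that strengthening is not needed here.
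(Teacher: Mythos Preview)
The paper does not supply a proof of this proposition; it simply attributes the statement to Benoist--Quint and proceeds, with the subsequent paragraph noting that the cited result in fact holds for arbitrary continuous observables (at the cost of letting the constants depend on $f$). There is therefore nothing in the paper to compare your argument against line by line.

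Your argument is correct and self-contained. The key observation---that because $c_1,c_2$ are allowed to depend on $\varepsilon$ and $k$, the inequality reduces to the statement that the ergodic averages converge uniformly over the compact Lipschitz ball $\mathcal{F}_k$---is exactly right, and the Arzel\`a--Ascoli step combined with the $1$-Lipschitz property of $f\mapsto\|\tfrac{1}{n}S_nf\|_\infty$ to pass from pointwise to uniform convergence over $\mathcal{F}_k$ is clean. Your closing remark is also apt: the exponential factor $e^{-c_2 n\varepsilon^2}$ is presumably inherited from the sharper Benoist--Quint formulation in which the constants are independent of $\varepsilon$, and in the form actually stated here it carries no additional content beyond what your elementary argument provides.
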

Actually, Proposition~\ref{prop:breiman} holds for arbitrary continuous observables, not just Lipschitz ones, but in general the constants $c_1$ and $c_2$ will depend on $f$ (see \cite[Proposition 3.1]{Benoist:2016ur}). On the other hand, for systems like axiom A diffeomorphisms (see \cite[\S5]{Chazottes:2015ti}) and holomorphic endomorphisms of projective space (with $n\varepsilon^2$ replaced by $n(\log n)^{-2}p(\varepsilon)$ for a suitable function $p$; see \cite{Dinh:2010tg}), the constants do not depend on $\varepsilon$.

\subsection{Examples} \label{subsection:examples}

For the convenience of those not overly familiar with the statistics of dynamical systems, we present several well-known examples where the features described in \S~\ref{subsection:statistics} can be observed. The reader should bear these examples in mind as candidate tracial dynamical systems to which Theorem~\ref{thm:models} and Theorem~\ref{thm:lift} can be applied (provided that the space $X$ is connected and the map $h\colon X\to X$ is continuous).

\subsubsection*{Subshifts of finite type} Suppose that $A$ is a $0$-$1$ matrix which is mixing (that is, for some $M\in\nn$, all of the entries of $A^M$ are nonzero), $h\colon\Sigma_A\to\Sigma_A$ is the associated subshift of finite type and $\mu=\mu_\varphi$ is the Gibbs measure associated to some potential $\varphi$. Then, the CLT and EDC hold \cite[\S1E]{Bowen:2008uh}. Using Markov partitions \cite[\S3C]{Bowen:2008uh} to construct symbolic dynamics \cite[\S3D]{Bowen:2008uh}, one sees that they also hold for mixing axiom A diffeomorphisms \cite[Theorem 4.1]{Bowen:2008uh}.

\subsubsection*{Expanding circle maps} These types of dynamical systems are used to model the `intermittency' of turbulent flows, which transition between periodic and chaotic behaviour (see  \cite{Pomeau:1980vl}). Here is one example (see \cite{Liverani:1999ug} and \cite[\S3.5]{Chazottes:2015ti}). Let $\alpha\in\left(0,1/2\right)$ and define $h\colon[0,1]\to[0,1]$ by
\[
 h(t)=
 \begin{cases}
 t+2^\alpha t^{1+\alpha} & \text{ if }\ 0\le t < \frac{1}{2}\\
 2t-1 & \text{ if }\ \frac{1}{2} \le t\le 1.
 \end{cases}
\]
Identifying the boundary points of $[0,1]$, we view this as a continuous map $S^1\to S^1$. There is a unique ergodic invariant probability measure $\mu$ that is equivalent to Lebesgue measure. The dynamical system $(S^1,\mu,h)$ has polynomial (rather than exponential) mixing rates but still satisfies the CLT \cite[Theorem 6]{Young:1999vu}. Moreover, the \emph{almost-sure CLT} holds for any Lipschitz observable $f\colon S^1\to\mathbb{R}$ (see \cite[Theorem 18]{Chazottes:2015ti}): If $\int fd\mu=0$ and $f$ is not a coboundary (so that the variance $\sigma_f^2$ of $f$ is nonzero), then for $\mu$-a.e.\ $t\in S^1$, the sequence of weighted averages $\frac{1}{D_n}\sum_{k=1}^n\frac{1}{k}\delta_{S_kf(t)/\sqrt{k}}$, where $\delta_x$ denotes the point mass at $x$ and $D_n=\sum_{k=1}^n\frac{1}{k}$, is $w^*$-convergent to $\mathcal{N}(0,\sigma_f^2)$.

For $\alpha=1/2$, the normalising factor $\sqrt{n}$ in equation (\ref{eqn:normal}) must be replaced by $\sqrt{n\log n}$, and for parameters $\alpha\in(1/2,1)$, there is convergence to a suitable non-Gaussian `stable law'; see \cite[Theorem 9]{Chazottes:2015ti}.

\subsubsection*{Billiards} A \emph{billiard table} is a closed connected domain $Q$ in the plane $\rr^2$ or the torus $\tot^2$ whose boundary $\partial Q$ consists of finitely many simple $C^3$ curves that meet each other only at their ends. The \emph{billiard} on $Q$ is the dynamical system in the unit tangent bundle $M$ of the ambient manifold restricted to $Q$ generated by the unit-speed motion of a tangent vector along a geodesic, whose interaction with $\partial Q$ is governed by the `angle of incidence equals angle of reflection' rule. Billiards are analysed via the discrete dynamical system $(X,h,\mu)$, where $X=\partial Q\times[-\frac{\pi}{2},\frac{\pi}{2}]$ (parameterised by $(r,\varphi)$, where $r$ is arc length along $\partial Q$ and $\varphi$ is the angle of incidence relative to an inward-facing normal), $h\colon X\to X$ is the collision map and $\mu$ is normalised Liouville measure $\mu=c\cos\varphi dr d\varphi$. The behaviour of the system depends on the shape of the boundary. \emph{Dispersing billiards} (for example, the periodic Lorentz gas), that is, those with convex boundary curves, admit the CLT and EDC (see \cite{Bunimovich:1991we, Chernov:1999wk, Chernov:2006tb}). Certain billiards called \emph{Bunimovich stadia} \cite{Bunimovich:1979uq} whose boundaries consist only of \emph{focusing} (concave) and \emph{neutral} (rectilinear) components satisfy the CLT \cite{Balint:2006ve} (in some cases requiring $\sqrt{n}$ to be replaced by $\sqrt{n\log n}$) but have polynomial mixing rates \cite{Chernov:2005up}. In both cases, the almost-sure CLT is also observed to hold (see, for example, \cite{Chazottes:2005tp} or \cite{Leppanen:2017tv} for dispersing billiards and \cite{Chazottes:2007wg} for stadia).

Unfortunately, the map $h$ will always have discontinuities (where boundary curves intersect nonsmoothly but also at instances of `grazing' collisions), so it does not seem that billiards are immediately suitable for $\cs$-dynamics. However, they can be modelled via certain countable Markov partitions called `Young towers' \cite{Young:1998uq}, so while not covered by the constructions of \S~\ref{subsection:models}, they could be  represented as actions on zero-dimensional spaces.

\subsubsection*{Complex dynamics} The CLT and EDC are exhibited by many complex-geometric dynamical systems $(X,\mu,h)$, with $\mu$ the measure of maximal entropy, such as: holomorphic endomorphisms of projective space \cite{Dupont:2010tt, Dinh:2010tg}, holomorphic automorphisms of positive entropy on compact K\"{a}hler surfaces \cite{Dinh:2010wo} (at least EDC) and meromorphic automorphisms of the Riemann sphere \cite{Denker:1996vk} (with $X$ the Julia set of $h$; note that in some cases, the Julia set is known to be connected \cite{Inninger:2004wn,Peherstorfer:2001tq} or even the whole Riemann sphere \cite{Inninger:2002wj}).

\subsection{Model building} \label{subsection:models}

Finally in this section, we show how to construct $\cs$-models that witness prescribed topological dynamical systems at the level of the trace space.

\begin{theorem} \label{thm:models}
Let $(X,d)$ be a compact, connected Riemannian manifold. Then, there exists a separable, simple, unital, nuclear, $\js$-stable, projectionless $\cs$-algebra $A$ that has trivial tracial pairing and
satisfies the UCT such that $\partial_e(T(A))\cong X$ and $\{a \mid \hat a \in \lip(X,d)\}$ is dense in $A_{sa}$. 
\end{theorem}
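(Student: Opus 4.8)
The plan is to pin down the trace simplex as the Bauer simplex $P(X)$ of Borel probability measures on $X$. Since $X\cong\partial_e(T(A))$ is compact and $T(A)$ is a metrisable Choquet simplex, $\partial_e(T(A))$ is closed, so $T(A)$ must be $P(X)$ and $\aff(T(A))_{sa}\cong C(X,\rr)$ isometrically, an affine function on $P(X)$ being determined by its values on the point masses $X$. Under this identification the observable $\hat a|_X$ of a self-adjoint element is precisely its image under the restriction map $q\colon A_{sa}\to C(X,\rr)$, $a\mapsto\hat a$.

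Granting a construction with $T(A)\cong P(X)$, the Lipschitz clause is then formal. Because $A$ is algebraically simple, the Cuntz--Pedersen-type surjectivity recalled around \eqref{eqn:rep2} shows that $q$ is a bounded surjective $\rr$-linear map of Banach spaces, hence open by the open mapping theorem. An open map carries dense sets back to dense sets, and $\lip(X,d)$ is a point-separating unital subalgebra of $C(X,\rr)$, hence dense by Stone--Weierstrass; therefore $q^{-1}(\lip(X,d))=\{a\mid\hat a\in\lip(X,d)\}$ is dense in $A_{sa}$. So the whole content is to build a $\cs$-algebra of the stated type with $T(A)\cong P(X)$.

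For this I would take the inductive limit $A_0=\varinjlim(C(X,Z_{p_k,q_k}),\phi_k)$ of tensor products of $C(X)$ with prime dimension-drop algebras. The monotraciality of each $Z_{p_k,q_k}$ keeps the trace simplex equal to $P(X)$ at every stage, and the type I building blocks are nuclear, separable and satisfy the UCT. Connectedness of $X$ does the decisive work on the remaining structure: a continuous projection-valued map into the projectionless algebra $Z_{p_k,q_k}$ is locally constant, hence constantly $0$ or $1$, so each block and therefore the limit is projectionless; and the trace of any projection in a matrix amplification is integer-valued in each fibre and so, by connectedness, constant across $X$, which forces $\rho_{A_0}(K_0(A_0))$ into the constants and gives trivial tracial pairing. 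The connecting maps I would take in diagonal form $\phi_k(f)(x)=u_k(x)\,\diag(f(\lambda^k_1(x)),\dots,f(\lambda^k_{N_k}(x)))\,u_k(x)^*$, with a dominant identity component carrying trace-weight tending summably to $1$ and a low-weight family of point evaluations at a set accumulating densely in $X$; the former pins the inverse limit of trace simplices to $P(X)$, while the latter, together with growing multiplicities, forces simplicity. Finally I would pass to $A=A_0\otimes\js$, which is $\js$-stable and retains every listed property, $\js$ being a monotracial, $KK$-trivial, projectionless ASH algebra; the Lipschitz density for $A$ follows exactly as above.

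The main obstacle is the simultaneous calibration of the connecting maps $\phi_k$. Simplicity wants the eigenvalue maps to spread over a dense subset of $X$ with multiplicities growing, whereas keeping the inverse limit of the simplices $(P(X),\phi_k^*)$ equal to $P(X)$ pulls them back towards the identity. Reconciling these is a weight-budgeting problem: a dominant identity part of overwhelming trace-weight to fix the simplex, set against a summably small but topologically dense family of point evaluations to buy simplicity. One must then verify that this inverse limit is genuinely $P(X)$ (so that $\partial_e(T(A_0))\cong X$) and that the standard density criterion for simplicity is met; $\js$-stability is most cleanly obtained by the final tensoring (or, alternatively, by slow dimension growth and the Winter--Castillejos et al.\ equivalence with finite nuclear dimension). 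Everything else, including the headline Lipschitz statement, is then formal.
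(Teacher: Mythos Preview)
Your open-mapping argument for the Lipschitz clause is correct and cleaner than the paper's route. The paper tracks Lipschitz constants explicitly through the inductive limit, building the connecting maps out of Lipschitz retractions $e_m\colon X\to\Gamma_m$ onto bi-Lipschitz arcs and bounding the cumulative dilation by $\prod_m e^{1/m^2}$; your observation that $q\colon A_{sa}\to C(X,\rr)$ is a bounded linear surjection of real Banach spaces, hence open, dispatches the density of $q^{-1}(\lip(X,d))$ immediately once $T(A)\cong P(X)$ is in hand.

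However, the construction of $A_0$ has a genuine error: the prime dimension drop algebras $Z_{p,q}$ are \emph{not} monotracial. Their extremal traces are the point evaluations along the interval, so $T(Z_{p,q})\cong P([0,1])$ and consequently $\partial_e\bigl(T(C(X)\otimes Z_{p_k,q_k})\bigr)\cong X\times[0,1]$, not $X$. Your connecting maps $\phi_k$ act only through eigenvalue maps $\lambda_i^k\colon X\to X$ on the $C(X)$ factor, so the interval direction is never collapsed and the inverse limit of trace simplices cannot be $P(X)$. The paper sidesteps this by using generalised dimension drop algebras \emph{over $X$ itself},
\[
X_{p,q}=\{f\in C(X,M_p\otimes M_q)\mid f(x_0)\in M_p\otimes 1_q,\ f(x_1)\in 1_p\otimes M_q\},
\]
which already have $\partial_e(T(X_{p,q}))\cong X$ and are projectionless for coprime $p,q$. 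If you switch to these building blocks your open-mapping shortcut still applies and the rest of your outline goes through; alternatively, as the paper itself remarks, $C(X,M_n)$ works if one is willing to drop projectionlessness.
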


\begin{proof}
The construction is based on that of \cite[\S4.4]{Jacelon:2021vc}, with dimension drop algebras (over the interval) replaced by generalised dimension drop algebras (over $X$), in the sense of \cite{Toms:2007fj, Lin:2010tp}. Specifically, fix $x_0,x_1\in X$ with $d(x_0,x_1)=\diam X$ and for coprime $p,q\in\nn$ define
\[
X_{p,q} = \{f\in C(X,M_p\otimes M_q) \mid f(x_0)\in M_p\otimes 1_q,\: f(x_1)\in1_p\otimes M_q\}.
\]
The $\cs$-algebra $X_{p,q}$ has the following properties. First, since $p$ and $q$ are coprime, $X_{p,q}$ has no nontrivial projections. Next, each trace on $X_{p,q}$ corresponds to some Borel probability measure on $X$, and $\partial_e(T(X_{p,q}))\cong X$ via point evaluations.  Since $X$ is connected, this means that $\hat f\in\aff(T(X_{p,q}))$ is constant for any projection $f\in X_{p,q}$, which in turn means that $X_{p,q}$ has trivial tracial pairing. Nuclearity and the UCT hold for $X_{p,q}$ since it is a type \rm{I} $\cs$-algebra (see \cite[15.8.2, 22.3.5]{Blackadar:1998qf}). Finally, by a suitable interpretation of Stone--Weierstrass (see, for example, \cite{Prolla:1994td}), the Lipschitz elements $\{f\in X_{p,q} \mid \exists K\:\forall x,y\in X\:(\|f(x)-f(y)\|\le Kd(x,y))\}$ are dense in $X_{p,q}$.

Let $(y_m)_{m\in\nn}$ be a dense sequence in $X$. We claim that, for each $m$, there exists a bi-Lipschitz path $\gamma_m\colon[0,1]\hookrightarrow X$ from $x_0$ to $x_1$ such that, for some $t_m\in[0,1]$, $z_m:=\gamma_m(t_m)$ satisfies $d(y_m,z_m)<\frac{1}{m}$. To see this, first note that, by the Hopf--Rinow theorem (see \cite[\S5.3]{Carmo:1976um}), any two points in $X$ can be joined by a length-minimising geodesic, which (when parameterised by arc length) is in the notation of \cite[\S2]{Drutu:2010vl} a `$(1,0)$-quasi-geodesic'. Assume that $y_m\notin\{x_0,x_1\}$, let $\mathfrak{p}_0\colon I\to X$ be a geodesic from $x_0$ to $y_m$ and $\mathfrak{p}_1\colon J\to X$ a geodesic from $y_m$ to $x_1$. If necessary, the path $\mathfrak{p}_1$ can be modified near to $Y:=\mathfrak{p}_0(I)$ so that the concatenation $\mathfrak{p}$ of $\mathfrak{p}_0$ and $\mathfrak{p}_1$ is simple and piecewise bi-Lipschitz. (In a small tubular neighbourhood of  $Y$ (which is diffeomorphic, hence locally Lipschitz equivalent, hence by compactness globally Lipschitz equivalent, to a convex neighbourhood of the normal bundle of $Y$ in $X$) cut out any intermediate points of intersection of $\mathfrak{p}_0$ and $\mathfrak{p}_1$ by going over or around $Y$ via piecewise linear paths in the normal bundle. If $\dim X=1$, then no modification is needed; if $\dim X\ge 3$, there are sufficient dimensions to go up and over $Y$; if $\dim X=2$, we may assume that the geodesic $\mathfrak{p}_0$ is defined on a larger interval $I'\supseteq I$ to give enough room to go around $Y$ at its endpoints.) Applying \cite[Lemma 2.5]{Drutu:2010vl} to $\mathfrak{p}$ yields a simple bi-Lipschitz path $\mathfrak{p}'$ from $x_0$ to $x_1$ at Hausdorff distance less than $\frac{1}{m}$ from $\mathfrak{p}$ (so in particular, there is some point $z_m$ on the path with $d(z_m,y_m)<\frac{1}{m}$). We take this path $\mathfrak{p}'$ as $\gamma_m$ (suitably reparameterised so that its domain is $[0,1]$).

By \cite[Theorem 2.4]{Matouskova:2000ut}, for each $m$ there exists $K_m>0$ and a $K_m$-Lipschitz map $e_m\colon X\to \Gamma_m=\gamma_m([0,1])$ such that $e_m(x_0)=z_m$ and $e_m(x_1)=x_1$ (that is, we extend the Lipschitz function $\gamma_m\circ\max\{t_m,\id\}\circ\gamma_m^{-1}\colon\Gamma_m\to\Gamma_m$ to $X$).

Now we build an inductive limit $\varinjlim(X_{p_m,q_m},\varphi_m)$, with each connecting map $\varphi_m$ of the form $\varphi_m(f)=\ad_u\circ\diag(f\circ\xi_1,\ldots,f\circ\xi_{N_m})$ for some maps $\xi_i\colon X\to X$, most of which are in fact the identity (to get the right trace space in the limit) and very few of which are not $1$-Lipschitz (so that Lipschitz elements in finite stages map to Lipschitz elements in the limit). To accomplish this, let $(p_m,q_m)_{m\in\nn}$ be the sequence of coprime positive integers constructed as in \cite[\S4.4]{Jacelon:2021vc}, ensuring that the numbers $N_m=\frac{p_{m+1}q_{m+1}}{p_mq_m}$ satisfy $\frac{q_{m+1}}{N_m} < \frac{1}{m^2}$ (for tracial control) and $\frac{q_{m+1}}{N_m} < \frac{e^{\frac{1}{m^2}}-1}{K_m}$ (for Lipschitz control). We define the functions $\xi_i\colon X\to X$, $1\le i\le N_m$, by
\[
 \xi_i=	
 \begin{cases}
 \id_X & \text{ if }\ 1\le i\le N_m-q_{m+1}\\
 z_m & \text{ if }\ N_m-q_{m+1}< i \le N_m-q_{m+1}+p_{m+1}\\
 e_m & \text{ if }\ N_m-q_{m+1}+p_{m+1} < i\le N_m.
 \end{cases}
\]
Then, there are unitaries $u_0,u_1\in M_{p_{m+1}}\otimes M_{q_{m+1}}$ (which can be connected via $u\colon X\to\Gamma_m\to[0,1]\to\mathcal{U}(M_{p_{m+1}}\otimes M_{q_{m+1}})$ similar to above) such that $u_0\diag(f\circ\xi_1,\ldots,f\circ\xi_{N_m})u_0^*\in M_{p_{m+1}}\otimes 1_{q_{m+1}}$ and $u_1\diag(f\circ\xi_1,\ldots,f\circ\xi_{N_m})u_1^*\in 1_{p_{m+1}}\otimes M_{q_{m+1}}$. This allows us to define the connecting map $\varphi_m\colon X_{p_m,q_m}\to X_{p_{m+1},q_{m+1}}$ by $\varphi_m(f)=\ad_u\circ\diag(f\circ\xi_1,\ldots,f\circ\xi_{N_m})$.

By construction, $A=\js\otimes\varinjlim(X_{p_m,q_m},\varphi_m)$ has the desired properties. (In fact, if $X$ has finite covering dimension, then by \cite[Theorem 1.6]{Winter:2004kq} and \cite{Winter:2012pi}, $\js$-stability is automatic.) In particular, $A$ is simple (by density of $(z_m)_{m\in\nn}\subseteq X$), $\partial_e(T(A))\cong X$ (since at each stage, most, i.e.\ at least the fraction $1-\frac{1}{m^2}$, of the connecting maps are the identity), and $\widehat{\varphi_{m,\infty}(f)}\in\lip(X,d)$ for every Lipschitz element $f\in X_{p_m,q_m}$ (its Lipschitz constant scaled by at most $\prod\limits_{m\in\nn} e^{\frac{1}{m^2}}$). 
\end{proof}

\begin{remark} \label{remark:sphere1}
\begin{enumerate}[1.]
\item The choice of the base points $x_0,x_1\in X$ in the proof of Theorem~\ref{thm:models} is somewhat arbitrary, but if $X$ happens to be a sphere (with $d$ the geodesic metric), then requiring that $d(x_0,x_1)=\diam X$ (that is, choosing antipodal base points) allows a choice of maps $e_m\colon X\to X$ that are $1$-Lipschitz. Specifically, given $y_m\in X$, let $\gamma_m$ be a geodesic (which we view as the meridian of longitude $0$) from $x_0$ to $x_1$ that passes through $y_m$, and define $e_m\colon X\to \Gamma_m=\gamma_m([0,1])$ by projection onto $\Gamma_m$. With this choice, $\widehat{\varphi_m(f)}\in\lip^1(X,d)$ for every $1$-Lipschitz element $f\in X_{p_m,q_m}$. That said, while potentially important in the one-dimensional setting (see \cite[\S4]{Jacelon:2021vc}), in the present context this is largely an aesthetic observation.
\item The construction is much simpler if we drop the requirement that $A$ be projectionless and is valid for any compact, connected metric space $(X,d)$: $X_{p_m,q_m}$ can be replaced by $A_m=C(X,M_{n_m})$, with $\varphi_m\colon A_m\to A_{m+1}$ defined just in terms of the identity map and point evaluations. In this case, we can arrange as in \cite[Lemma 3.7]{Thomsen:1994qy} to have $\rho_A(K_0(A))=G\cdot1\subseteq\aff(T(A))$ for any prescribed dense subgroup $G$ of $\qq$. In the projectionless case, strict comparison of positive elements (see \cite[Corollary 4.6]{Rordam:2004kq}) implies that $\rho_A(K_0(A))=\zz\cdot1$.  
\end{enumerate}
\end{remark}

\subsection{Finite Rokhlin dimension} \label{subsection:rokhlin}

Since the models constructed in Theorem~\ref{thm:models} are classifiable by the Elliott invariant, we can lift endomorphisms of trace spaces to the $\cs$-level. By ensuring that the lifted action has \emph{finite Rokhlin dimension}, a notion introduced in \cite{Hirshberg:2015wh} and extended in \cite{Hirshberg:2015ty} to actions on not necessarily unital $\cs$-algebras $A$ and further in \cite{Szabo:2019te} to (cocycle) actions of residually finite groups, we can arrange for the crossed product $A\rtimes_\alpha\nn$ to also be classifiable. This is not an essential requirement if our only interest is observing statistical features of the tracial dynamics, but this procedure will be used in \S~\ref{section:range} to investigate the attainable range.

Here, $A\rtimes_\alpha\nn$ is the crossed product by an endomorphism in the sense of Cuntz \cite[\S6.1]{Cuntz:1982uk} (see also \cite{Stacey:1993uc}), namely, $\alpha$ is extended to an automorphism $\underset{\to}{\alpha}$ of
\[
\underset{\to}{A}=\varinjlim\left(\begin{tikzcd}A \arrow[r,"\alpha"] & A \arrow[r,"\alpha"] & A \arrow[r,"\alpha"] & \ldots\end{tikzcd}\right)
\]
and $A\rtimes_\alpha\nn$ is defined to be the corner $p\left(\underset{\to}{A}\rtimes_{\underset{\to}{\alpha}}\zz\right)p$, where $p\in M\left(\underset{\to}{A}\rtimes_{\underset{\to}{\alpha}}\zz\right)$ is the image of $1_{M(A)}$ in the inclusion $M(A) \to M\left(\underset{\to}{A}\right) \to M\left(\underset{\to}{A}\rtimes_{\underset{\to}{\alpha}}\zz\right)$. In particular, $A\rtimes_\alpha\nn$ is stably isomorphic to $\underset{\to}{A}\rtimes_{\underset{\to}{\alpha}}\zz$, which means, for example, that finite nuclear dimension of one is equivalent to finite nuclear dimension of the other (see \cite[Corollary 2.8]{Winter:2010dn}).

For single endomorphisms $\alpha$, Rokhlin dimension is defined in \cite{Hirshberg:2015wh,Hirshberg:2015ty,Szabo:2019te} only when $\alpha$ is invertible, that is, for (cocycle) actions $\zz\to \Aut(A)$. However, just as in \cite[Definition 2.1]{Brown:2014ts}, which covers the case of Rokhlin dimension $0$, the definition admits a natural extension to the noninvertible case. In \cite{Brown:2014ts}, the only tweak is that the finite set $F$ that appears in the definition of Rokhlin dimension is taken to be an arbitrary subset not of $A$ but of $\alpha^p(A)$, where $p\in\nn$ is the integer that specifies the height of the tower. As in \cite[Proposition 2.2]{Brown:2014ts}, this is sufficient to guarantee that the automorphism $\underset{\to}{\alpha}$ has the usual Rokhlin property.

But actually, the only reason for this restriction is to include degenerate (e.g.\ nonunital) examples, especially the shift $\alpha\colon \bigotimes_\nn M_n = M_{n^\infty} \to M_{n^\infty}$, $a_1\otimes a_2\otimes a_3\otimes\cdots \mapsto e\otimes a_1\otimes a_2\otimes a_3\otimes\cdots$ (where $e\in M_n$ is a minimal projection), which yields $M_{n^\infty}\rtimes_\alpha\nn\cong\mathcal{O}_n$ (see \cite[\S2]{Cuntz:1977qy}). Since the unmodified version, that is, \cite[Definition 1.21]{Hirshberg:2015ty} with single noncommuting towers, can reasonably be expected to hold for nondegenerate (e.g.\ unital) endomorphisms, this is the definition we adopt. In other words, in \cite[Definition 1.21]{Hirshberg:2015ty} we drop condition (6) and insist that $f^{(l)}_{1,j}=0$ for all $0\le j\le p$ to arrive at the following.

\begin{definition} \label{def:rok}
An endomorphism $\alpha$ of a $\cs$-algebra $A$ is said to have \emph{Rokhlin dimension} $d$ if $d$ is the least nonnegative integer with the following property. For any finite set $F\subseteq A$, integer $p\ge1$ and $\varepsilon>0$, there are positive contractions $f^{(l)}_{0},\dots,f^{(l)}_{p-1}\in A$, $l\in\{0,1,\dots,d\}$, such that:
\begin{enumerate}[1.]
\item $\|f^{(l)}_{k}f^{(l)}_{j}a\| < \varepsilon$ for every $a\in F$, $l\in\{0,1,\dots,d\}$, $j\ne k\in\{0,1,\dots,p-1\}$;
\item $\left\|\left(\sum_{l=0}^d\sum_{j=0}^{p-1}f^{(l)}_{j}\right)a-a\right\| < \varepsilon$ for every $a\in F$;
\item $\|[f^{(l)}_{j},a]\| < \varepsilon$ for every $a\in F$, $l\in\{0,1,\dots,d\}$, $j\in\{0,1,\dots,p-1\}$;
\item $\left\|\left(\alpha(f^{(l)}_{j})-f^{(l)}_{j+1}\right)a\right\| < \varepsilon$ for every $a\in F$, $l\in\{0,1,\dots,d\}$, $j\in\{0,1,\dots,p-1\}$, where $f^{(l)}_{p}:=f^{(l)}_{0}$.
\end{enumerate}

This is equivalent to the following more succinct version phrased in terms of Kirchberg's central sequence algebra $F(A) = (A_\mathcal{U} \cap A')/\Ann(A,A_\mathcal{U})$ associated to a free ultrafilter $\mathcal{U}$ on $\nn$ (see \cite[\S1]{Kirchberg:2006fk} and \cite[Definition 2.6]{Gardella:2021tb}). Namely, $\alpha$ has Rokhlin dimension $\le d$ if and only if, for every $p$, there are positive contractions $f^{(l)}_{\bar m}\in F(A)$, $l\in\{0,1,\dots,d\}$, $\bar m\in \zz/p\zz$, such that:
\begin{enumerate}[1.] \label{enum:rok}
\item $f^{(l)}_{\bar m}f^{(l)}_{\bar n} = 0$ for every $l\in\{0,1,\dots,d\}$, $\bar m \ne \bar n\in \zz/p\zz$;
\item $\sum_{l=0}^d\sum_{\bar m \in \zz/p\zz} f^{(l)}_{\bar m} = 1$;
\item $\bar\alpha(f^{(l)}_{\bar m}) = f^{(l)}_{\overline{m + 1}}$ for every $l\in\{0,1,\dots,d\}$, $\bar m\in \zz/p\zz$, where $\bar\alpha$ is the action on $F(A)$ induced by $\alpha$.
\end{enumerate}
\end{definition}

Since the Elliott invariant is insensitive to (approximate) unitary equivalence, we obtain the following from \cite{Szabo:2019te} (or in the unital setting, \cite[Theorem 3.4]{Hirshberg:2015wh}). Note that we omit all cocycles and only consider the group $G=\zz$.

\begin{lemma} \label{lemma:rokhlin}
Let $A$ be a separable, $\js$-stable $\cs$-algebra and let $\beta\in\End(A)$ be nondegenerate (that is, $\beta$ maps an(y) approximate unit of $A$ to an approximate unit of $A$). Then, there is a nondegenerate $\alpha\in\End(A)$ such that $\alpha$ has Rokhlin dimension $\le1$ and $\Ell(\alpha)=\Ell(\beta)$.
\end{lemma}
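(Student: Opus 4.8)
The plan is to obtain $\alpha$ by tensorial absorption of a model endomorphism of $\js$ with finite Rokhlin dimension, re-running the tower constructions behind \cite[Theorems 11.5 and 11.14]{Szabo:2019te} in the noninvertible setting and for the modified definition of Rokhlin dimension adopted above. Exploiting $\js$-stability, fix an isomorphism $\theta\colon A\to A\otimes\js$; by strong self-absorption the first-factor embedding $\iota\colon a\mapsto a\otimes 1$ satisfies $\theta^{-1}\circ\iota\approx\id_A$ (approximate unitary equivalence), and $\iota$ intertwines $\beta$ with $\beta\otimes\id_\js$ on the nose. The idea is to replace $\id_\js$ by a suitable $\sigma\in\End(\js)$ and define $\alpha=\theta^{-1}\circ(\beta\otimes\sigma)\circ\theta$.

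First I would produce the model: a unital, injective, nondegenerate endomorphism $\sigma$ of $\js$ of Rokhlin dimension $\le 1$. The natural candidate is the one-sided tensor shift on $\js\cong\bigotimes_{\nn}\js$, the analogue of the shift $M_{n^\infty}\to M_{n^\infty}$ of \S\ref{subsection:rokhlin}, whose approximate Rokhlin towers are assembled from the leading tensor factors; alternatively one extracts $\sigma$ by restricting to the forward semigroup the model $\zz$-action on $\js$ used in \cite{Szabo:2019te}. The point is to verify that these towers meet the adapted definition --- a single noncommuting tower, with condition (6) of \cite[Definition 1.21]{Hirshberg:2015ty} dropped and $f^{(l)}_{1,j}=0$ --- which is precisely where the original argument must be reworked.

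Granting $\sigma$, I would invoke permanence of Rokhlin dimension under tensoring: the towers for $\sigma$, living in $\js_\infty\cap\js'$, embed through the second tensor factor into $(A\otimes\js)_\infty\cap(A\otimes\js)'$, where $(\beta\otimes\sigma)_\infty$ acts on them exactly as $\sigma_\infty$ does and the boundary conditions persist; hence $\beta\otimes\sigma$, and therefore its conjugate $\alpha$, has Rokhlin dimension $\le 1$. Nondegeneracy of $\alpha$ follows from nondegeneracy of $\beta$ and unitality of $\sigma$. For the invariant, $\js$ has $K_0=\zz\cdot[1_\js]$, $K_1=0$ and a unique trace, so every unital endomorphism of $\js$ induces the identity on $\Ell(\js)$; thus $\Ell(\sigma)=\Ell(\id_\js)$, and combining tensor-multiplicativity of $\Ell$, the natural isomorphism $\Ell(A)\cong\Ell(A\otimes\js)$ implemented by $\iota$, and invariance of $\Ell$ under approximate unitary equivalence, we obtain $\Ell(\alpha)=\Ell(\beta\otimes\sigma)=\Ell(\beta\otimes\id_\js)=\Ell(\beta)$.

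The main obstacle is the second step. The construction in \cite{Szabo:2019te} is written for $\zz$-actions, and invertibility of the generator is used when transporting tower maps; one must confirm that the forward-only version still closes up into an approximate tower of the prescribed height and meets the degenerate boundary condition $f^{(l)}_{1,j}=0$ with condition (6) removed. Once this is in place, the permanence and invariant computations are routine.
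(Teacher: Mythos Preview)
Your proposal is correct and follows essentially the same route as the paper: the paper does not spell out a proof but simply says the lemma ``is obtained from the proofs of \cite[Theorems 11.5 and 11.14]{Szabo:2019te}'' together with the observation that $\Ell$ is insensitive to approximate unitary equivalence, and the SWZ proofs are precisely the tensor-with-a-model-endomorphism-of-$\js$ argument you describe. Your identification of the one nontrivial point --- checking that the forward-only tower construction still yields Rokhlin dimension $\le1$ for the modified definition --- is exactly the gap the paper leaves implicit in the phrase ``from the proofs of''; the paper's invariance of $\Ell$ is just a terser packaging of your final paragraph, since SWZ in fact produce $\alpha$ approximately unitarily equivalent to $\beta$ (via $\theta^{-1}\circ\iota\approx\id_A$), from which $\Ell(\alpha)=\Ell(\beta)$ is immediate.
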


\begin{proof}
By \cite[Remark 11.13]{Szabo:2019te}, there is an automorphism $\theta$ of $\js$ (namely, a tensor product of shifts) that has Rokhlin dimension $1$. Note that $\Ell(\theta)$ is the identity. As in \cite[Theorem 11.5]{Szabo:2019te}, since $\js$ is strongly self-absorbing there is an isomorphism $\varphi\colon A\to A\otimes\js$ that is approximately unitarily equivalent to $\id_A\otimes1_{\js}$. By \cite[Proposition 11.7]{Szabo:2019te}, $\beta\otimes\theta$ has Rokhlin dimension $\le 1$, and hence so does $\alpha=\varphi^{-1}\circ(\beta\otimes\theta)\circ\varphi$.
\end{proof}

Next, we observe that crossed products of classifiable $\cs$-algebras by endomorphisms with finite Rokhlin dimension are also classifiable. 

\begin{lemma} \label{lemma:classifiable}
Let $A$ be a simple, separable $\cs$-algebra of finite nuclear dimension, and let $\alpha$ be a nondegenerate endomorphism of $A$ with finite Rokhlin dimension. Then, $A\rtimes_\alpha\nn$ is simple and has finite nuclear dimension. Moreover, \[T\left(\underset{\to}{A}\rtimes_{\underset{\to}{\alpha}}\zz\right)\cong T\left(\underset{\to}{A}\right)^{\underset{\to}{\alpha}}.\]
\end{lemma}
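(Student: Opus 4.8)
The plan is to push everything through the dilated system $\underset{\to}{A}\rtimes_{\underset{\to}{\alpha}}\zz$ and then descend to the corner $A\rtimes_\alpha\nn$, treating simplicity, nuclear dimension and traces in turn; throughout I will use that, by the definition of Rokhlin dimension for endomorphisms adopted above, the automorphism $\underset{\to}{\alpha}$ of $\underset{\to}{A}$ inherits finite Rokhlin dimension from $\alpha$. For simplicity, I first note that since $A$ is simple and $\alpha$ is nondegenerate (hence nonzero), its kernel is a proper ideal, so $\alpha$ is injective and the connecting maps of $\underset{\to}{A}$ are injective $^*$-homomorphisms between copies of the simple algebra $A$. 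I would then run the standard ideal correspondence for inductive limits: a closed ideal $I\trianglelefteq\underset{\to}{A}$ pulls back to ideals $I_n=\varphi_{n,\infty}^{-1}(I)\trianglelefteq A$, each of which is $0$ or all of $A$ by simplicity, and since $\varphi_{n,\infty}(A)\subseteq\varphi_{n+1,\infty}(A)$ the nonzero case forces $I=\underset{\to}{A}$; hence $\underset{\to}{A}$ is simple. Finite Rokhlin dimension of $\underset{\to}{\alpha}$ implies that every nonzero power $\underset{\to}{\alpha}^{\,n}$ is outer, so Kishimoto's theorem gives that $\underset{\to}{A}\rtimes_{\underset{\to}{\alpha}}\zz$ is simple, whence $A\rtimes_\alpha\nn=p(\underset{\to}{A}\rtimes_{\underset{\to}{\alpha}}\zz)p$ is simple as a hereditary corner of a simple $\cs$-algebra.

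For nuclear dimension, I would first observe that nuclear dimension does not increase under inductive limits, so $\dim_{\mathrm{nuc}}(\underset{\to}{A})\le\dim_{\mathrm{nuc}}(A)<\infty$ by \cite{Winter:2010dn}. The permanence result for crossed products by finite-Rokhlin-dimension $\zz$-actions (\cite{Hirshberg:2015wh,Szabo:2019te}) then yields $\dim_{\mathrm{nuc}}(\underset{\to}{A}\rtimes_{\underset{\to}{\alpha}}\zz)<\infty$, and finite nuclear dimension passes to the stably isomorphic corner $A\rtimes_\alpha\nn$ by \cite[Corollary 2.8]{Winter:2010dn}.

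For the trace space, write $E\colon\underset{\to}{A}\rtimes_{\underset{\to}{\alpha}}\zz\to\underset{\to}{A}$ for the canonical faithful conditional expectation. Restriction $\tau\mapsto\tau|_{\underset{\to}{A}}$ is a continuous affine map into $T(\underset{\to}{A})^{\underset{\to}{\alpha}}$, invariance being immediate from $\tau(\underset{\to}{\alpha}(a))=\tau(uau^*)=\tau(a)$, and $\sigma\mapsto\sigma\circ E$ is a section of it, so restriction is surjective; it remains to show it is injective, equivalently that every $\tau\in T(\underset{\to}{A}\rtimes_{\underset{\to}{\alpha}}\zz)$ factors as $\tau=\tau\circ E$, i.e.\ $\tau(au^n)=0$ for $n\ne0$. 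Here I would use that the restricted trace $\sigma=\tau|_{\underset{\to}{A}}$ is faithful (a nonzero trace on the simple algebra $\underset{\to}{A}$ is faithful, since its trace-kernel is a proper closed ideal) and that finite Rokhlin dimension forces $\underset{\to}{\alpha}$ to be strongly outer; passing to the trace von Neumann algebra $\pi_\sigma(\underset{\to}{A})''$, outerness of the weak extension $\overline{\underset{\to}{\alpha}}^{\,n}$ annihilates the matrix coefficient computing $\tau(au^n)$, giving $\tau=\tau\circ E$ and hence the desired affine homeomorphism $T(\underset{\to}{A}\rtimes_{\underset{\to}{\alpha}}\zz)\cong T(\underset{\to}{A})^{\underset{\to}{\alpha}}$.

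The main obstacle I anticipate is this last injectivity step: one must upgrade the $\cs$-outerness guaranteed by finite Rokhlin dimension to genuine strong outerness of the weak extensions on the tracial von Neumann algebra, and then run the Kishimoto-type averaging argument that kills the off-diagonal terms $au^n$. Everything else is either bookkeeping with the inductive system or a direct appeal to the established permanence theorems.
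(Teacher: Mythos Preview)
Your proposal is correct and follows essentially the same route as the paper: pass to the dilated automorphism $\underset{\to}{\alpha}$, use finite Rokhlin dimension $\Rightarrow$ outerness together with Kishimoto for simplicity, invoke the Rokhlin-dimension permanence theorem of \cite{Szabo:2019te} for finite nuclear dimension, and use strong outerness to identify traces on the crossed product with the $\underset{\to}{\alpha}$-invariant traces. The only packaging differences are that the paper cites \cite[Proposition 2.3]{Liao:2016ui} directly for the trace identification (rather than unfolding the conditional-expectation argument as you do), and it resolves exactly the obstacle you flag---upgrading finite Rokhlin dimension to strong outerness---by pointing to the proof of \cite[Theorem 7.8]{Gardella:2021tb}, adapted to the nonunital setting via Kirchberg's $F_\infty(A)$ and \cite{Nawata:2019uq}.
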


\begin{proof}
Extending to $\underset{\to}{A}$, we may assume that $\alpha$ is an automorphism. Since $\alpha$ has finite Rokhlin dimension, $\alpha^m$ is outer for every $m\in\zz\setminus\{0\}$. In fact, $\alpha\colon\zz\to\Aut(A)$ is \emph{strongly} outer, that is, for any $m\ne0$ and any $\alpha$-invariant trace $\tau\in T(A)$, the unique extension of $\alpha^m$ to a trace-preserving automorphism $\alpha^m_\tau$ of $\mathcal{M}=\pi_\tau(A)''$, the von Neumann closure of the Gelfand--Naimark--Segal (GNS) representation associated to $\tau$, is outer. To see this, one proceeds exactly as in the proof of \cite[Theorem 7.8 $(3)\Rightarrow(1)$]{Gardella:2021tb}, just replacing $A_\mathcal{U}\cap A'$ by $F(A)$. We recall this argument here for convenience.

\sloppy
By \cite[Propositions 2.2 and 2.3]{Nawata:2019uq}, we have the required (unital, equivariant) quotient map $\kappa_\tau\colon F(A)\to\mathcal{M}^{\mathcal{U}}_\tau\cap\mathcal{M}_\tau'$. The key point is that, were $\alpha^m_\tau$ an inner automorphism of $\mathcal{M}_\tau$, say $\alpha^m_\tau=\ad_u$ for some unitary $u\in \mathcal{M}_\tau$, then every $x\in\mathcal{M}^{\mathcal{U}}_\tau\cap\mathcal{M}_\tau'$ would commute with $u$ and so the action $\overline{\alpha^m_\tau}$ induced by $\alpha^m_\tau$ on $\mathcal{M}^{\mathcal{U}}_\tau\cap\mathcal{M}_\tau'$ would be trivial. We now show that this is not the case. Choose $p$ such that $m\notin p\zz$, and let $f^{(l)}_{\bar k}\in F(A)$, $l\in\{0,1,\dots,d\}$, $\bar k\in \zz/p\zz$, be as in Definition~\ref{def:rok} for this $p$. Since $\kappa_\tau$ is unital, there is $l_0\in\{0,1,\dots,d\}$ such that $\kappa_\tau(f^{(l_0)}_{\bar k}) \ne 0$ for some (and hence, by equivariance, every) $\bar k \in \zz/p\zz$. Then, $\left\{\kappa_\tau(f^{(l_0)}_{\bar k})\right\}_{\bar k \in \zz/p\zz}$ are pairwise orthogonal positive contractions with $\overline{\alpha^m_\tau}(\kappa_\tau(f^{(l_0)}_{\bar k})) = \kappa_\tau(f^{(l_0)}_{\overline{m+k}})$. Since $m\notin p\zz$, there is $\bar k$ such that $\overline{m+k}\ne\bar k$ in $\zz/p\zz$, and so $\overline{\alpha^m_\tau}$ is indeed nontrivial.
\fussy

By \cite[Theorem 3.1]{Kishimoto:1981tg}, outerness of $\alpha\colon\zz\to\Aut(A)$ implies that $A\rtimes_\alpha\zz$ is simple. Finite nuclear dimension is guaranteed by \cite[Theorem 6.2]{Szabo:2019te}. The final statement follows from \cite[Proposition 2.3]{Liao:2016ui}, whose proof also works in the nonunital setting.
\end{proof}

Combining Lemma~\ref{lemma:rokhlin} and Lemma~\ref{lemma:classifiable} yields the following.

\begin{theorem} \label{thm:lift}
Let $A$ be a simple, separable, unital, $\js$-stable $\cs$-algebra that has trivial tracial pairing and satisfies the UCT, and for which the extreme boundary $X_A$ of the trace space $T(A)$ is compact. Then, for every group homomorphism $\kappa_1\colon K_1(A)\to K_1(A)$ and continuous map $h\colon X_A\to X_A$, there exists a unital endomorphism $\alpha$ of $A$ such that $K_1(\alpha)=\kappa_1$, $T(\alpha)|_{X_A}=h$ and the crossed product $A\rtimes_\alpha\nn$ is classifiable.
\end{theorem}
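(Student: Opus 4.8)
The plan is to realise the prescribed $K_1$-action and tracial dynamics by some endomorphism, then upgrade it to one of finite Rokhlin dimension without disturbing that data, and finally read off classifiability of the crossed product from Lemma~\ref{lemma:classifiable}. The key point is that all three ingredients have been assembled earlier: the extension of $h$ to the trace space (Lemma~\ref{lemma:extend}), the existence theory for morphisms used in Lemma~\ref{lemma:denselift}, and the Rokhlin-dimension and crossed-product results of \S~\ref{subsection:rokhlin}.

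First I would build a unital $\beta\in\End(A,X)$ carrying the desired invariant. Extend $h$ to $h_\ast\in\aff_X(T(A),T(A))$ by Lemma~\ref{lemma:extend}, and, using the UCT, choose a class $\kappa\in KK(A,A)$ with $K_0(\kappa)=\id_{K_0(A)}$ and $K_1(\kappa)=\kappa_1$; such a lift exists because the UCT sequence splits, so the $\mathrm{Ext}$-term imposes no constraint on the $\mathrm{Hom}$-part. The only compatibility to verify links the $K_0$-map to the trace map: for $x\in K_0(A)$ one needs $\rho_A(K_0(\kappa)(x))=\rho_A(x)\circ h_\ast$. Since the tracial pairing is trivial, $\rho_A(x)$ is a constant function, which $h_\ast$ leaves fixed, so the requirement reduces to $\rho_A(K_0(\kappa)(x))=\rho_A(x)$ and holds with $K_0(\kappa)=\id$; the remaining components of the invariant (the total $K$-theory induced by $\kappa$ and a Hausdorffised $K_1$ map) can then be filled in compatibly precisely because this pairing vanishes. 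The existence half of the classification of morphisms (\cite[Corollary 5.13]{Gong:2021va}; see also \cite{Carrion:wz}) now produces a unital $\beta\in\End(A,X)$ with $KL(\beta)=\kappa$, hence $K_1(\beta)=\kappa_1$, and with $T(\beta)=h_\ast$, so that $T(\beta)|_X=h$.

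Next I would apply Lemma~\ref{lemma:rokhlin} to the nondegenerate (indeed unital) endomorphism $\beta$: since $A$ is separable and $\js$-stable, there is a nondegenerate $\alpha\in\End(A)$ of Rokhlin dimension $\le1$ with $\Ell(\alpha)=\Ell(\beta)$. As $A$ is unital, nondegeneracy forces $\alpha(1_A)=1_A$, so $\alpha$ is unital, and since $\Ell(\alpha)=\Ell(\beta)$ records the induced maps on every component of the invariant we retain $K_1(\alpha)=\kappa_1$ and $T(\alpha)=T(\beta)$, hence $T(\alpha)|_X=h$. Finally, Lemma~\ref{lemma:classifiable} applies: $A$ is simple, separable and of finite nuclear dimension (by $\js$-stability, using \cite{Castillejos:2021wm,Winter:2012pi}), and $\alpha$ is nondegenerate with finite Rokhlin dimension, so $A\rtimes_\alpha\nn$ is simple with finite nuclear dimension. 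It is unital because $A$ and $\alpha$ are; its trace space is nonempty because any $h$-invariant Borel probability measure on the compact space $X$ (one exists by Krylov--Bogolyubov) yields an $\alpha$-fixed trace and hence, via $T(\underset{\to}{A}\rtimes_{\underset{\to}{\alpha}}\zz)\cong T(\underset{\to}{A})^{\underset{\to}{\alpha}}$, a trace on the crossed product; and the UCT passes from $A$ to the inductive limit $\underset{\to}{A}$ and then to $\underset{\to}{A}\rtimes_{\underset{\to}{\alpha}}\zz$ (the UCT class being closed under inductive limits and crossed products by $\zz$), hence to the stably isomorphic corner $A\rtimes_\alpha\nn$. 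Thus $A\rtimes_\alpha\nn$ is classifiable. I expect the main obstacle to be the first step, where one must realise an \emph{arbitrary} endomorphism $\kappa_1$ of $K_1$ (not an automorphism) together with a possibly noninvertible $h$; this is exactly what forces the use of the noninvertible existence theory and the crossed product by $\nn$, and it is the triviality of the tracial pairing that removes the usual $K_0$--trace compatibility obstruction and lets the $K$-theoretic and tracial data be prescribed independently.
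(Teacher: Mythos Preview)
Your proposal is correct and follows essentially the same route as the paper: extend $h$ affinely via Lemma~\ref{lemma:extend}, use triviality of the tracial pairing to assemble a compatible morphism of the invariant and invoke the existence theorem from \cite{Gong:2021va} to obtain $\beta$, then apply Lemma~\ref{lemma:rokhlin} and Lemma~\ref{lemma:classifiable}. The only minor discrepancy is that the paper cites \cite[Theorem~5.12]{Gong:2021va} rather than \cite[Corollary~5.13]{Gong:2021va} (the latter carries extra hypotheses such as torsion-free $K_1$, which are not assumed here), so you should either switch the citation or check that the corollary's hypotheses are met; your added verifications (unitality of $\alpha$, nonemptiness of the trace space, persistence of the UCT) are welcome elaborations the paper leaves implicit.
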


\begin{proof}
By Lemma~\ref{lemma:extend}, we can extend $h$ to a continuous affine map $T(A)\to T(A)$. Then, since $A$ has trivial tracial pairing, $h$, $\kappa_1$ and $\id\colon K_0(A)\to K_0(A)$ determine a homomorphism $\Ell(A)\to\Ell(A)$. By \cite[Theorem 5.12]{Gong:2021va}, there exists a unital endomorphism $\beta\in\End(A)$ such that $T(\beta)=h$ and $K_1(\beta)=\kappa_1$. Finally, by Lemma~\ref{lemma:rokhlin}, we can find $\alpha\in\End(A)$ with $\Ell(\alpha)=\Ell(\beta)$ such that $\alpha$ has finite Rokhlin dimension, which by Lemma~\ref{lemma:classifiable} implies that $A\rtimes_\alpha\nn$ is classifiable.   
\end{proof}

Together, Theorems \ref{thm:models} and \ref{thm:lift} provide the means of lifting a topological dynamical system $(X,h)$ on a compact metric space to a $\cs$-dynamical system $(A,\alpha)$ on a model classifiable $\cs$-algebra, such that the crossed product is also classifiable. Moreover, the statistical phenomena described in \S~\ref{subsection:statistics} can be translated into tracial versions that are witnessed by the dense subset $\lip(A) = \bigcup_{k\in\nn}\lip^k(A)\subseteq A_{sa}$, where
\[
\lip^k(A) = \{a\in A_{sa} \mid \hat a \in \lip^k(X,d)\}.
\]
Here are some illustrative examples. 

\begin{example}[Estimates of large tracial deviation] \label{example:dps}
Suppose that $h\colon X\to X$ is uniquely ergodic. Let $\mu$ be the measure fixed by $h$ and let $\tau_\mu$ be the corresponding unique trace fixed by $\alpha$. Then, by Proposition~\ref{prop:breiman}, for every $\varepsilon>0$ and every $k\in\nn$, there exist constants $c_1,c_2>0$ such that, for every $a\in\lip^k(A)$ and every $n\in\nn$,
\[
\mu\left(\left\{\tau\in \partial_e(T(A)) \mid \left|\frac{1}{n}\sum_{i=0}^{n-1}\tau(\alpha^i(a))-\tau_\mu(a)\right|>\varepsilon\right\}\right) \le c_1e^{-c_2n\varepsilon^2}.
\]
This in particular applies to minimal, uniquely ergodic homeomorphisms of odd spheres. In this case, one computes from the six-term exact sequence that
\[
(K_0(S^{2m-1}_{p,q}),(K_0(S^{2m-1}_{p,q}))_+,[1],K_1(S^{2m-1}_{p,q}))\cong  (\zz,\nn,1,\zz).
\]
Taking $\kappa_1\colon K_1(A)\to K_1(A)$ in Theorem~\ref{thm:lift} to be the zero homomorphism, the extended algebra $\underset{\to}{A}$ has the same Elliott invariant as (and is therefore isomorphic to) a limit of prime dimension drop algebras. One computes from the Pimsner--Voiculescu sequence (see also \cite[Theorem 10.10.4]{Blackadar:1998qf}) that
\[
(K_0(\underset{\to}{A}\rtimes_{\underset{\to}{\alpha}}\zz),(K_0(\underset{\to}{A}\rtimes_{\underset{\to}{\alpha}}\zz))_+,[1],K_1(\underset{\to}{A}\rtimes_{\underset{\to}{\alpha}}\zz)) \cong (\zz,\nn,1,\zz).
\]
Therefore, $\underset{\to}{A}\rtimes_{\underset{\to}{\alpha}}\zz$ is isomorphic (hence $A\rtimes_\alpha\nn$ is at least stably isomorphic) to the $\cs$-algebra $C(Z)\rtimes_\zeta\zz$ of \cite[Proposition 2.8]{Deeley:aa}, which contains $\js$ as a large subalgebra.
\end{example}

\begin{example}[Exponentially fast tracial mixing]
Suppose that $(X,\mu,h)$ has EDC (see Definition~\ref{def:edc}). Then, there exists $\gamma\in(0,1)$ such that for every $a,b\in\lip(A)$ there exists $C>0$ such that for every $n\in\nn$,
\[
\left|\int_{\partial_e(T(A))} \widehat{\alpha^n(a)}\hat b\;d\mu - \tau_\mu(a) \cdot \tau_\mu(b)\right| \le C\gamma^n.
\]
This holds, for example, when the tracial dynamical system is given by a holomorphic automorphism $h$ of positive entropy on a compact K\"{a}hler surface $X$ (see \S~\ref{subsection:examples}).
\end{example}

\begin{example}[The tracial central limit theorem]
Suppose that $(X,\mu,h)$ satisfies the CLT (see Definition~\ref{def:clt}). Then, for every $a\in\lip(A)$ with $\sigma=\sigma_{\hat{a}|_X} > 0$ (which is the typical case) translated so that $\tau_\mu(a)=0$,
\[
\lim_{n\to\infty} \mu\left(\left\{\tau\in \partial_e(T(A)) \mid \frac{1}{\sqrt{n}}\sum_{i=0}^{n-1}\tau(\alpha^i(a)) \le z \right\}\right) = \frac{1}{\sigma\sqrt{2\pi}}\int_{-\infty}^z\exp\left(-\frac{t^2}{2\sigma^2}\right)dt
\]
for every $z\in\rr$. This holds, for example, for tracial dynamical systems given by mixing axiom A diffeomorphisms like hyperbolic toral automorphisms, various systems arising from complex geometry, and certain expanding circle maps (see \S~\ref{subsection:examples}).

\end{example}

\section{The range of the invariant} \label{section:range}

\begin{theorem} \label{thm:range}
Let $\mathcal{K}$ be the class of infinite-dimensional, simple, separable, $KK$-contractible $\cs$-algebras that satisfy the UCT and have continuous scale, nonempty trace space and finite nuclear dimension. Then, for every $B\in\mathcal{K}$ there exists $A\in\mathcal{K}$ and an automorphism $\alpha\in\Aut(A)$ such that $\partial_e(T(A))$ is compact and zero dimensional, and $A\rtimes_\alpha\zz\cong B$.
\end{theorem}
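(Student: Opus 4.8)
The plan is to reduce the problem to dynamics on a Cantor set, feed it through the lifting machinery already assembled in this section, and close with the Elliott--Gong--Lin--Niu classification of $KK$-contractible algebras. Since $B\in\mathcal{K}$ is $KK$-contractible, its total $K$-theory and tracial pairing vanish, so by \cite{Elliott:2020wc} its isomorphism class within $\mathcal{K}$ is determined entirely by the metrisable Choquet simplex $S=T(B)$. The first step is therefore to realise $S$ dynamically: by Downarowicz's theorem \cite{Downarowicz:1991te}, there is a minimal homeomorphism $h$ of a Cantor space $Z$ whose simplex $M(Z)^h$ of invariant Borel probability measures is affinely homeomorphic to $S$.

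Next I would produce the coefficient algebra $A$. Appealing to the range-of-the-invariant part of \cite{Elliott:2020wc}, there exists a $KK$-contractible algebra $A\in\mathcal{K}$ whose trace space is the Bauer simplex $M(Z)$, so that $\partial_e(T(A))\cong Z$ is compact and zero dimensional, and whose tracial pairing is trivial because $K_0(A)=0$. Extending $h$ to an affine homeomorphism $h_*$ of $T(A)=M(Z)$ via Lemma~\ref{lemma:extend}, the stably projectionless existence and uniqueness theorems of Gong--Lin (as invoked in the proof of Lemma~\ref{lemma:denselift}; see \cite[Theorems 12.8, 13.1]{Gong:2020vg}) yield an automorphism $\beta$ of $A$ with $T(\beta)=h_*$, compatibility with $K$-theory being automatic since $KK(A,A)=0$. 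Lemma~\ref{lemma:rokhlin} then replaces $\beta$ by an automorphism $\alpha$ with $\Ell(\alpha)=\Ell(\beta)$ of Rokhlin dimension at most $1$, leaving $T(\alpha)=h_*$ unchanged.

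With $\alpha$ in hand, Lemma~\ref{lemma:classifiable} shows $A\rtimes_\alpha\zz$ is simple with finite nuclear dimension and $T(A\rtimes_\alpha\zz)\cong T(A)^\alpha$. Because $T(A)$ is the Bauer simplex on $Z$ and $T(\alpha)=h_*$ is the pushforward by $h$, the fixed-point simplex $T(A)^\alpha$ is exactly $M(Z)^h\cong S=T(B)$; in particular it is compact, so $A\rtimes_\alpha\zz$ has continuous scale. The Pimsner--Voiculescu sequence \cite{Pimsner:1980yu}, applied to $K_*(A)=0$, forces $K_*(A\rtimes_\alpha\zz)=0$, and the crossed product inherits the UCT; hence $A\rtimes_\alpha\zz$ is itself $KK$-contractible and lies in $\mathcal{K}$. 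Finally, $A\rtimes_\alpha\zz$ and $B$ are both $KK$-contractible members of $\mathcal{K}$ with affinely homeomorphic trace spaces, so \cite{Elliott:2020wc} gives $A\rtimes_\alpha\zz\cong B$.

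I expect the main obstacle to be the lifting step in the nonunital, stably projectionless regime: Theorem~\ref{thm:lift} is stated only for unital $A$ and endomorphic crossed products $A\rtimes_\alpha\nn$, so one must instead assemble the automorphic analogue by hand from the stably projectionless existence and uniqueness results together with Lemma~\ref{lemma:rokhlin}, while confirming that the resulting $\alpha$ genuinely induces $h_*$ on $T(A)$ and has finite Rokhlin dimension so that Lemma~\ref{lemma:classifiable} applies. The identification $T(A)^\alpha\cong M(Z)^h$ is the conceptual heart, but is clean once $T(\alpha)$ is known to be the pushforward $h_*$; everything else amounts to verifying that the crossed product remains in $\mathcal{K}$.
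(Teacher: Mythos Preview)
Your proposal is correct and follows essentially the same route as the paper: Downarowicz to realise $T(B)$ as an invariant-measure simplex over a zero-dimensional space, the range-of-the-invariant part of \cite{Elliott:2020wc} to produce $A\in\mathcal{K}$, the existence theorem plus Lemma~\ref{lemma:rokhlin} to lift $h$ to an automorphism of finite Rokhlin dimension, Lemma~\ref{lemma:classifiable} and Pimsner--Voiculescu to place the crossed product back in $\mathcal{K}$, and finally classification. The one point where you are quicker than the paper is the claim that compactness of $T(A\rtimes_\alpha\zz)$ alone gives continuous scale; the paper instead checks \cite[Definition~5.1]{Elliott:2020vp} directly, using that an approximate unit of $A$ passes to one of the crossed product and that Cuntz comparison is tracially determined there---you should do the same rather than inferring continuous scale from compactness of the trace simplex.
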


\begin{proof}
By \cite[Theorem 3]{Downarowicz:1991te}, there is a subshift $X$ of the full shift $(\{0,1\}^\zz,h)$ such that the simplex $\mathcal{M}(X)^h$ of $h$-invariant Borel probability measures on $X$ is affinely homeomorphic to $T(B)$. Let $K$ be a Bauer simplex with $\partial_e(K)\cong X$ (namely, the simplex of Borel probability measures on $X$) and let $A$ be the unique object in $\mathcal{K}$ (up to isomorphism) with $T(A)\cong K$. As in the proof of Theorem~\ref{thm:lift}, but appealing to \cite[Theorem 7.5]{Elliott:2020wc} instead of \cite[Theorem 5.12]{Gong:2021va}, we can extend $h$ to an affine homeomorphism $T(A)\to T(A)$ and lift it to an automorphism $\alpha\colon A\to A$ with finite Rokhlin dimension. By Lemma~\ref{lemma:classifiable}, $A\rtimes_\alpha\zz$ is simple and $\js$-stable (since it has finite nuclear dimension---see \cite{Tikuisis:2012kx}), and
\[
T(A\rtimes_\alpha\zz) \cong T(A)^\alpha \cong \mathcal{M}(X)^h \cong T(B).
\]
The Pimsner--Voiculescu sequence shows that $A\rtimes_\alpha\zz$ is $KK$-contractible. The UCT for $A$ passes to the crossed product $A\rtimes_\alpha\zz$  (see \cite[22.3.5]{Blackadar:1998qf}). Finally, $A\rtimes_\alpha\zz$ also satisfies the definition \cite[Definition 5.1]{Elliott:2020vp} of continuous scale: Fix an increasing approximate unit $(e_n)_{n\in\nn}$ for $A$ that satisfies
\[
e_{n+1}e_n = e_ne_{n+1} = e_n \quad \text{ for every } n\in\nn
\]
and such that, for any nonzero positive element $a\in A$, there exists $N\in\nn$ such that
\begin{equation} \label{eqn:comp1}
e_m-e_n\lesssim a \quad \text{ for every } m>n\ge N
\end{equation}
(where $\lesssim$ denotes Cuntz subequivalence). Note that $(e_n)_{n\in\nn}$ passes to an approximate unit for $A\rtimes_\alpha\zz$. Since $\|\tau\|=\lim_{n\to\infty}\tau(e_n)$ for any trace $\tau$, it follows that all traces on $A\rtimes_\alpha\zz$ are bounded. Since $A\rtimes_\alpha\zz$ is simple and $\js$-stable, it has strict comparison of positive elements (see \cite[Theorem 4.4, Theorem 6.6]{Elliott:2009kq}, and note that there are no compact elements of $\cu(A\rtimes_\alpha\zz)$ since $A\rtimes_\alpha\zz$ is stably projectionless). In other words, to demonstrate equation (\ref{eqn:comp1}) for some fixed nonzero positive element $b\in A\rtimes_\alpha\zz$ on the right-hand side, it suffices to show that
\begin{equation} \label{eqn:comp2}
d_\tau(e_m-e_n) < d_\tau(b) \quad \text{ for every } \tau\in T(A\rtimes_\alpha\zz).
\end{equation}
Here, $d_\tau\colon x\mapsto \lim_{n\to\infty}\tau(x^{\frac{1}{n}})$ is the rank function associated to the trace $\tau$. For this fixed $b\in A\rtimes_\alpha\zz$, the function $T(A)^\alpha \to (0,\infty)$, $\tau\mapsto d_\tau(b)$ (first extending $\tau\in T(A)^\alpha$ uniquely to $T(A\rtimes_\alpha\zz)$) is lower semicontinuous, hence attains its nonzero minimum $\varepsilon$ on the compact set $T(A)^\alpha$. By \cite[Theorem 6.6]{Elliott:2009kq}, there exists a positive element $a\in A$ whose Cuntz class corresponds to the constant function $\frac{\varepsilon}{2}$ on $T(A)$, that is, for which $d_\tau(a)=\frac{\varepsilon}{2}$ for every $\tau\in T(A)$. (We may assume that $a$ is in $A$, rather than its stabilisation, by replacing $\varepsilon$ by $\min\{\varepsilon,1\}$. Since for any strictly positive element $h\in A$, the function $\tau\mapsto d_\tau(h)=\|\tau\|$ is constantly $1$ on $T(A)$, this implies that $a\lesssim h$. By \cite[Theorem 11.5]{Elliott:2020vp} or \cite[Corollary 6.8]{Fu:2022aa}, $A$ has stable rank one, so it follows that $a$ is Cuntz equivalent, in fact Murray--von Neumann equivalent, to an element of the hereditary subalgebra generated by $h$, which is $A$.) By equation (\ref{eqn:comp1}), there exists $N\in\nn$ such that, for every $m>n\ge N$ and every $\tau\in T(A)^\alpha$, $d_\tau(e_m-e_n) \le d_\tau(a) < d_\tau(b)$, that is, such that equation (\ref{eqn:comp2}) holds.

We have now verified that $A\rtimes_\alpha\zz\in\mathcal{K}$, and so by the classification obtained in \cite{Elliott:2020wc}, $A\rtimes_\alpha\zz\cong B$.
\end{proof}

\begin{remark} ~\label{remark:final}
\begin{enumerate}[1.]
\item Dropping the assumption of continuous scale, the crossed products of Theorem~\ref{thm:range} cover, up to stable isomorphism, the full class of infinite-dimensional, simple, separable, stably projectionless, $KK$-contractible $\cs$-algebras that have finite nuclear dimension and satisfy the UCT.
\item Suppose that instead of a $KK$-contractible $\cs$-algebra, $A$ is taken to be a limit of prime dimension drop algebras. Then, $B=A\rtimes_\alpha\zz$ has $K$-theory
\[
(K_0(B),(K_0(B))_+,[1],K_1(B)) \cong (\zz,\nn,1,\zz).
\]
Therefore, the $\cs$-algebras constructed in this way include (up to isomorphism) the algebras $C(Z_\varphi)\rtimes_\zeta\zz$ of \cite[\S3]{Deeley:aa}.
\end{enumerate}
\end{remark}

\end{document}